\def\line#1{\hbox to \hsize{#1\hfill}}
\newtheorem{prop}{Proposition}
\newtheorem{defi}{Definition}
\newtheorem{lemm}{Lemma}
\newtheorem{theo}{Theorem}
\newtheorem{coro}{Corollary}
\newcommand{\R}[1][]{\ensuremath{{\mathbb{R}^{#1}} }}
\title[Geometric structure on tangent bundles]%
{A new geometric structure on tangent bundles}
\begin{document} 

\author{Nikos Georgiou}
\address{Nikos Georgiou\\
  Department of Mathematics\\
          Waterford Institute of Technology\\
          Waterford\\
          Co. Waterford\\
          Ireland.}
\email{ngeorgiou@wit.ie }

\author{Brendan Guilfoyle}
\address{Brendan Guilfoyle\\
          School of STEM \\
          Institute of Technology, Tralee \\
          Clash \\
          Tralee  \\
          Co. Kerry \\
          Ireland.}
\email{brendan.guilfoyle@ittralee.ie}

\begin{abstract}
For a Riemannian manifold $(N,g)$, we construct a scalar flat metric $G$ in the tangent bundle $TN$. It is locally conformally flat if and only if either, $N$ is a 2-dimensional manifold or, $(N,g)$ is a real space form. It is also shown that $G$ is locally symmetric if and only if $g$ is locally symmetric. We then study submanifolds in $TN$ and, in particular, find the conditions for a curve to be geodesic. The conditions for a Lagrangian graph to be minimal or Hamiltonian minimal in the tangent bundle $T{\mathbb R}^n$ of the Euclidean real space ${\mathbb R}^n$ are studied. Finally, using the cross product in ${\mathbb R}^3$ we show that the space of oriented lines in ${\mathbb R}^3$ can be minimally isometrically embedded in $T{\mathbb R}^3$.
\end{abstract}

\date{14th June 2018}
\maketitle
\tableofcontents

%%%%%%%%%%%%%%%%%%%%%%%%%%%%%%%%%%%%%%%%%%%%%%%%%%%
\section{Introduction}
%%%%%%%%%%%%%%%%%%%%%%%%%%%%%%%%%%%%%%%%%%%%%%%%%%%

The geometry of the tangent bundle $TN$ of a Riemannian manifold $(N,g)$ has been a topic of great interest for the last 60 years. In the celebrated article \cite{Sasaki}, Sasaki used the Levi-Civita connection of $g$ to split the tangent bundle $TTN$ of $TN$ into a horizontal and a vertical part, constructing the first geometric structure  of $TN$. . Namely, one can obtain a splitting 
$TTN = HN \oplus VN$, where the subbundles $HN$ and $VN$ of $TTN$ are both isomorphic to the tangent bundle $TN$ - for more details see section \ref{s:intr}. For $\bar X\in TTN$, we write $\bar X\simeq (\Pi\bar X,K\bar X)$, where $\Pi\bar X\in HN$ and $K\bar X\in VN$.  Sasaki defined the following metric on $TN$ \cite{gudkappos}:
\[
(\bar X,\bar Y)\mapsto g(\Pi\bar X, \Pi\bar Y) + g(K\bar X, K\bar Y),
\]
The Sasaki metric is "rigid" in the following sense: it is scalar flat if and only if $g$ is flat \cite{Kowalski}. 

In the years since, several new geometries on the tangent bundle $TN$ have been constructed using the splitting of $TTN$ - see for example \cite{mustricerri} and \cite{yano}.   
When the base manifold $N$ admits additional structure one can use it to define other geometries in the tangent bundle. H. Anciaux and R. Pascal 
constructed in \cite{AnaR} a canonical pseudo-Riemannian metric in $TN$ derived from a K\"ahler structure on $N$. One example, the canonical neutral metric in  $T{\mathbb S}^2$ defined by the standard K\"ahler structure of the round 2-sphere ${\mathbb S}^2$, has been used to study classical differential geometry in ${\mathbb R}^3$ - see \cite{angui}, \cite{gk1} and \cite{gk2}.  

Using the Riemannian metric $g$ one can define a canonical symplectic structure $\Omega$ in $TN$. This can be achieved by using the musical isomorphism between the tangent bundle and the cotangent bundle. In this article we use the existence of an almost paracomplex structure $J$ on $TN$ compatible with $\Omega$ to construct a neutral metric $G$ on $TN$. 

If the base manifold $N$ admits a K\"ahler structure, the neutral metric $G$ and the pseudo-Riemannian metric, derived from the K\"ahler structure, are isometric - see Propostion \ref{p:isometricmetrics}. In other words, the neutral metric $G$ is a natural extension of the K\"ahler metric constructed by H. Anciaux and R. Pascal in \cite{AnaR} to the case where the base manifold does not admit a K\"ahler structure.

The purpose of this article is to study the geometric properties and submanifolds of the neutral metric $G$. We first prove the following:
\begin{theo}\label{t:oneofthema}
The neutral metric $G$ has the following properties:
\begin{enumerate}
\item $G$ is scalar flat,
\item $G$ is Einstein if and only if $g$ is Ricci flat,
\item $G$ is locally conformally flat if and only if either $n=2$ or, $g$ is of constant sectional curvature,
\item $G$ is locally symmetric if and only if $g$ is locally symmetric.
\end{enumerate}
\end{theo}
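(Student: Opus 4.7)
The approach is to reduce each of the four assertions to properties of the curvature tensor of the base metric $g$. The first step is to derive the Levi-Civita connection of $G$ in terms of horizontal and vertical lifts. Combining the classical bracket identities
\[
[X^H,Y^H]_{(p,u)}=[X,Y]^H-(R^g(X,Y)u)^V,\qquad [X^H,Y^V]=(\nabla^g_XY)^V,\qquad [X^V,Y^V]=0,
\]
with Koszul's formula applied to $G$ (whose block structure on $HN\oplus VN$ is determined by $J$ and the symplectic form $\Omega$), I would obtain explicit expressions for $\nabla^G_{X^H}Y^H$, $\nabla^G_{X^H}Y^V$, $\nabla^G_{X^V}Y^H$ and $\nabla^G_{X^V}Y^V$, each of which is either a horizontal lift, a vertical lift, or a $u$-linear curvature correction built from $R^g$.

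The next step is to differentiate the connection once more to produce the Riemann tensor $R^G$, and then trace twice with respect to $G$. I expect the paracomplex signature of $G$ (which pairs each horizontal frame vector with a vertical one of opposite sign) to cause the horizontal--horizontal and vertical--vertical contributions to the scalar curvature to cancel identically, yielding (1). With $\mathrm{scal}(G)\equiv 0$ in hand, the Einstein condition immediately forces $\mathrm{Ric}^G=0$, and the explicit formula relating $\mathrm{Ric}^G$ to $\mathrm{Ric}^g$ should then give (2). For (4), the tensor $\nabla^G R^G$ splits into components polynomial in $R^g$, $\nabla^g R^g$ and $u$; the purely horizontal block reduces to $\nabla^g R^g$, which makes the forward direction immediate, while the remaining blocks vanish once $\nabla^g R^g=0$.

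The main obstacle is part (3). Since $\dim TN=2n\geq 4$, local conformal flatness is equivalent to the vanishing of the Weyl tensor $W^G$, which I would compute directly from $R^G$, $\mathrm{Ric}^G$ and $G$ via the usual Schouten decomposition. For $n\geq 3$, the plan is to single out the horizontal--vertical--vertical--horizontal block of $W^G$ and show that its vanishing, after polarising in $u$, is equivalent to
\[
R^g(X,Y)Z=\kappa\bigl(g(Y,Z)X-g(X,Z)Y\bigr)
\]
for some scalar function $\kappa$ on $N$; Schur's lemma then upgrades $\kappa$ to a constant, so $g$ has constant sectional curvature, and the converse is a direct substitution. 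The delicate case is $n=2$: here $\dim TN=4$ and I expect the various components of $W^G$ to collapse identically because every $2$-dimensional metric is locally conformally flat, so that $R^g$ already satisfies the Schouten-type identity that would constitute the $n\geq 3$ obstruction. Verifying this special cancellation in a carefully chosen adapted frame, and disentangling the contributions of $J$, $R^g$ and $g$ inside $W^G$, is the step I foresee being the most computationally intricate.
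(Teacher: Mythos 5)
Your plan reproduces the paper's proof essentially step for step: the bracket identities plus the Koszul formula give the connection, hence the curvature tensor of $G$, hence $\overline{Ric}(\bar X,\bar Y)=2\,\mathrm{Ric}(\Pi\bar X,\Pi\bar Y)$; parts (1) and (2) then follow exactly as you say (the Ricci tensor of $G$ is purely horizontal while $G^{-1}$ only pairs horizontal with vertical), and parts (3) and (4) are handled by the same Weyl-tensor and $\nabla\overline{R}$ computations, including the same treatment of $n=2$ via the pointwise constant-curvature form of a surface's curvature tensor. One small correction to your plan for (3): every term of $\overline{Rm}$ carries exactly one $K$ and three $\Pi$'s (plus a $(D_VR)$-term with four $\Pi$'s), so the horizontal--vertical--vertical--horizontal block you propose to single out vanishes identically and carries no information; the component that does the work is $\overline{Rm}(X^h,Y^h,Z^h,W^v)=Rm(X,Y,Z,W)$, with three horizontal slots and one vertical, and no polarisation in $u$ is needed there since the $u$-linear term $g((D_VR)(\cdot,\cdot)\cdot,\cdot)$ only enters the all-horizontal block.
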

We then focus our attention on submanifolds. In particular, the geodesics of $G$ are characterized by:
\begin{theo}\label{t:geodesics}
A curve $\gamma(t)=(x(t),V(t))$ in $TN$ is a geodesic with respect to the metric $G$ if and only if the curve $x$ is a geodesic on $N$ and $V$ is a Jacobi field along $x$. 
\end{theo}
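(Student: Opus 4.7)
The plan is to work directly with the horizontal/vertical splitting $TTN = HN\oplus VN$ and to express the Levi-Civita connection $\nabla^{G}$ of $G$ in terms of the Levi-Civita connection $\nabla$ of $g$ and the curvature tensor $R$ of $(N,g)$. For a curve $\gamma(t)=(x(t),V(t))$ the tangent vector decomposes as
\[
\dot\gamma\;\simeq\;(\Pi\dot\gamma,\,K\dot\gamma)\;=\;(\dot x,\,\nabla_{\dot x}V),
\]
so the geodesic equation $\nabla^{G}_{\dot\gamma}\dot\gamma=0$ splits naturally into a horizontal and a vertical component. The theorem will then reduce to showing that the horizontal component vanishes iff $x$ is a $g$-geodesic, and that the vertical component vanishes iff $V$ satisfies the Jacobi equation along $x$.

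To obtain $\nabla^{G}$ I would apply the Koszul formula to horizontal and vertical lifts $X^H,X^V$ of vector fields $X,Y$ on $N$, making use of the standard bracket identities on $TN$,
\[
[X^H,Y^H]=[X,Y]^H-(R(X,Y)u)^V,\qquad [X^H,Y^V]=(\nabla_X Y)^V,\qquad [X^V,Y^V]=0,
\]
at $u\in TN$, together with the fact that $G$ pairs $HN$ and $VN$ in the neutral way induced by the almost paracomplex structure $J$ compatible with $\Omega$. This yields explicit expressions for $\nabla^{G}_{X^H}Y^H$, $\nabla^{G}_{X^H}Y^V$, $\nabla^{G}_{X^V}Y^H$ and $\nabla^{G}_{X^V}Y^V$ as sums of horizontal and vertical lifts involving $\nabla$ and $R$. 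Many of these computations are essentially contained in the work needed to prove Theorem \ref{t:oneofthema} and can be reused here.

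Substituting $\dot\gamma=(\dot x)^H+(\nabla_{\dot x}V)^V$ into $\nabla^{G}_{\dot\gamma}\dot\gamma$ and expanding, I expect the horizontal part to reduce to $(\nabla_{\dot x}\dot x)^{H}$ and the vertical part to reduce to $\bigl(\nabla_{\dot x}\nabla_{\dot x}V+R(V,\dot x)\dot x\bigr)^{V}$, up to sign conventions. Vanishing of these two components then gives exactly that $x$ is a $g$-geodesic and that $V$ is a Jacobi field along $x$; the converse is immediate by reading the decomposition backwards. To keep the bookkeeping manageable I would carry out the calculation at a fixed $t_0$ by extending $\dot x(t_0)$ and $V(t_0)$ to local vector fields on $N$ whose covariant derivatives vanish at $x(t_0)$, killing spurious lower-order terms.

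The main obstacle is ensuring that the curvature contributions coming from $\nabla^{G}$ on mixed horizontal-vertical terms combine to produce precisely the Jacobi operator $\nabla_{\dot x}^{2}+R(\,\cdot\,,\dot x)\dot x$ in the vertical slot, and in particular that no curvature remnant leaks into the horizontal equation. This is the point at which the specific neutral (rather than Sasakian) nature of $G$ is decisive: unlike the Sasaki metric, for which the base curve of a geodesic need not itself be a geodesic, the neutral pairing makes the horizontal equation decouple, yielding the clean characterization of the theorem.
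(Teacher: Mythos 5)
Your proposal is correct and follows essentially the same route as the paper: the authors compute the Levi-Civita connection of $G$ via the Koszul formula and the bracket identities of Proposition \ref{p:decomposition} in Section \ref{s3}, obtaining $\Pi\nabla_{\bar X}\bar Y=D_{\Pi\bar X}\Pi\bar Y$ and $K\nabla_{\bar X}\bar Y=D_{\Pi\bar X}K\bar Y+R(V,\Pi\bar X)\Pi\bar Y$, and then substitute $\dot\gamma\simeq(\dot x, D_{\dot x}V)$ exactly as you outline, so the horizontal equation gives $D_{\dot x}\dot x=0$ and the vertical equation gives the Jacobi equation. Your anticipated decoupling of the horizontal component is indeed what happens, and no further issues arise.
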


It was shown in \cite{AGR}, that the existence of a minimal Lagrangian graph in $TN$, where $(N,g)$ is a 2-dimensional Riemannian manifold, implies that $g$ is flat. A generalization of this result is given by the following Theorem:

\begin{theo}\label{t:ricciflat}
If $TN$ contains a Lagrangian graph with parallel mean curvature then the neutral metric $G$ is Ricci flat.
\end{theo}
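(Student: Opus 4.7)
The overall strategy is to reduce the conclusion to the Ricci flatness of the base metric $g$ and then invoke Theorem \ref{t:oneofthema}. Parts (1) and (2) of that theorem give that $G$ is scalar flat and, once $\operatorname{Ric}_g=0$, Einstein; a scalar-flat Einstein metric is automatically Ricci flat. So the real content is to establish the implication: the existence of a Lagrangian graph with parallel mean curvature in $(TN,G)$ forces $\operatorname{Ric}_g=0$.

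First I would parametrize such a graph as $\Sigma=\{(x,V(x)):x\in N\}$ for some vector field $V$ on $N$. The Lagrangian condition with respect to the canonical symplectic form $\Omega$ translates, through the musical isomorphism provided by $g$, into the symmetry of the tensor $(X,Y)\mapsto g(\nabla_{X}V,Y)$, equivalently into the closedness of $V^{\flat}$. Using the splitting $TTN=HN\oplus VN$, a local tangent frame along $\Sigma$ is given by $E_i\simeq(e_i,\nabla_{e_i}V)$ where $\{e_i\}$ is an orthonormal frame on $N$, and the paracomplex structure $J$ identifies the normal bundle canonically with another copy of $TN$. Computing the induced metric and the shape operator via the Koszul formula for $\nabla^G$ yields an explicit expression for the mean curvature vector $H$ in terms of $V$, $\nabla V$, $\nabla^2 V$ and the curvature tensor $R$ of $g$. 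Because the Ricci identity produces $R(e_i,e_j)V$ when symmetrising second derivatives, $H$ will have the schematic form $L(\nabla V,\nabla^2 V)+C\,\operatorname{Ric}(V,\cdot)$ with $C$ a nonzero constant and $L$ a differential expression in $V$ that does not involve the curvature of $g$.

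Imposing $\nabla^{\perp}H\equiv 0$ along $\Sigma$ and expanding in the frame $\{E_i\}$ then gives an identity that must hold for every vector field $V$ satisfying the Lagrangian constraint. Since one is free to prescribe the first two jets of $V$ at any point (subject only to $dV^{\flat}=0$, which leaves ample room), the piece involving $\operatorname{Ric}$ must vanish independently of the piece involving the jets of $V$, which forces $\operatorname{Ric}_g=0$. The main obstacle is the intermediate step, namely producing a sufficiently clean formula for $H$ and its normal covariant derivative so that the algebraic term in $\operatorname{Ric}$ can be isolated from the differential terms in $V$; this is essentially a bookkeeping calculation with the Koszul formula for $G$ and the horizontal/vertical decomposition, but care is required when tracking the curvature contributions that appear from commutators of horizontal lifts and from the interaction between $J$ and $\nabla^G$. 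Once $\operatorname{Ric}_g=0$ is established, Theorem \ref{t:oneofthema} finishes the argument as described above.
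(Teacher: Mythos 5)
There is a genuine gap at the decisive step. Your plan is to compute the mean curvature $\mathbb{H}$ of the graph explicitly, impose $\nabla^{\perp}\mathbb{H}=0$, and then argue that because the $2$-jet of $V$ can be prescribed freely at a point, the term involving $\mathrm{Ric}$ must vanish separately. But the hypothesis supplies only \emph{one} Lagrangian graph with parallel mean curvature, not a family: the identity $\nabla^{\perp}\mathbb{H}=0$ holds for that particular $V$ and no other. You are therefore not entitled to vary $V$, and for a single fixed $V$ the equation is merely a third-order relation coupling $V$ to the curvature of $g$; it does not decouple into ``a piece in the jets of $V$'' plus ``a piece in $\mathrm{Ric}$'' that must vanish independently. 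This quantifier error is the whole difficulty, and no amount of careful bookkeeping in the formula for $\mathbb{H}$ will repair it.

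The paper's proof supplies exactly the structural input your computation lacks: for a Lagrangian submanifold of the para-K\"ahler manifold $(TN,G,\Omega,J)$ the Maslov form $\eta=G(J\mathbb{H},\cdot)$ satisfies the Dazord-type identity $d\eta=\tfrac{1}{2}\,\overline{Ric}(J\cdot,\cdot)|_{\mathbb{V}}$. Parallel mean curvature makes $\eta$ closed, so $\overline{Ric}(J\bar X,\bar Y)=0$ for all vectors tangent to the single given graph; since for a graph $\Pi J\,df(X)=\Pi\,df(X)=X$ and $\overline{Ric}=2\,\mathrm{Ric}(\Pi\cdot,\Pi\cdot)$ by Proposition \ref{p:ricci}, this yields $\mathrm{Ric}_g=0$ pointwise with no variation of $V$ required, and Ricci flatness of $G$ then follows from the same proposition. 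Your closing reduction (scalar flat plus Einstein implies Ricci flat) is correct but unnecessary, as Proposition \ref{p:ricci} gives $\overline{Ric}=0$ directly from $\mathrm{Ric}_g=0$. If you want to keep a computational flavour, the right target is to derive the identity $d\eta=\tfrac{1}{2}\,\overline{Ric}(J\cdot,\cdot)$ itself from the Codazzi equation, not to vary the graph.
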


To continue with submanifolds, we need to introduce some more terminology. A vector field $X$ in a symplectic manifold $(N,\omega)$, is called a {\it Hamiltonian field} if $\omega(X,.)=dh$, where $h$ is a smooth function on $N$. A Lagrangian immersion will be called {\it Hamiltonian minimal} if the variations of its volume along all Hamiltonian vector fields are zero. Let $\Sigma$ be a Lagrangian submanifold in a (para-) K\"ahler manifold $(N,g,\omega,j)$, and $H$ be its mean curvature. The first variation formula shows that $\Sigma$ is Hamiltonian minimal if and only if the tangential vector field $jH$ is divergence free \cite{Oh}.

For Lagrangian graphs in $T{\mathbb R}^n$ we prove the following:

\begin{theo}\label{t:oneofthemainthe}
Suppose that $u$ is a $C^4$-smooth function in an open set of ${\mathbb R}^{n}$ and let $f$ be the corresponding Lagrangian graph $f:{\mathbb R}^{n}\rightarrow T{\mathbb R}^{n}:p\mapsto (p,Du(p))$ in $T{\mathbb R}^{n}$. Then the following two statements hold true:
\begin{enumerate}
\item If $u$ is functionally related of second order then the induced metric $f^{\ast}G$ is flat.
\item The graph $f$ is Hamiltonian minimal if and only if $\log|\det Hess\, u|$, is an harmonic function with respect to the metric $G$ induced by $f$.
\item The graph $f$ is minimal if and only if $u$ satisfies the following Monge-Amp\'ere equation:
\begin{equation}\label{e:minimalcondition}
\det\mbox{Hess}(u)=c_0,
\end{equation}
where $c_0$ is a positive real constant.
Furthermore, $f$ is totally geodesic if and only if $u$ is of the following form:
\begin{equation}\label{e:totallycondition}
u(x_1,\ldots,x_n)=\sum_{1\leq i\leq j\leq n}(a_{ij}x_ix_j+b_ix_i+c).,
\end{equation}
where $a_{ij},b_i$ and $c$ are all real constants and $\det a_{ij}>0$. In particular, every totally geodesic Lagrangian graph is flat.
\end{enumerate}
\end{theo}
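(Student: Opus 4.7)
The plan is to work in the canonical coordinates $(x^{1},\dots,x^{n},V^{1},\dots,V^{n})$ on $T\mathbb{R}^{n}$, in which the flatness of the base identifies the horizontal and vertical subbundles of $TT\mathbb{R}^{n}$ with $\mathrm{span}\{\partial_{x^{i}}\}$ and $\mathrm{span}\{\partial_{V^{i}}\}$ respectively, the paracomplex structure $J$ swaps them up to sign, and the neutral metric $G$ takes the off-diagonal form $G=\sum_{i}(dx^{i}\otimes dV^{i}+dV^{i}\otimes dx^{i})$. For the Lagrangian graph $f(x)=(x,Du(x))$ one then computes $f_{*}\partial_{i}=\partial_{x^{i}}+u_{ij}\,\partial_{V^{j}}$ and so
\begin{equation*}
f^{*}G \;=\; 2\,u_{ij}\,dx^{i}\,dx^{j},
\end{equation*}
i.e.\ the induced metric is (twice) the Hessian of $u$, assumed non-degenerate. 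This identification is the crux: every remaining assertion is then a statement about the Hessian metric and the Lagrangian angle $\theta:=\log|\det \mathrm{Hess}\,u|$.

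For part (3) I would compute the second fundamental form directly. Since $J$ sends horizontal to vertical and the base connection is flat, the Gauss formula reduces to $h(\partial_{i},\partial_{j})=u_{ijk}\,J\partial_{k}$; tracing with the inverse Hessian gives
\begin{equation*}
H \;=\; u^{jk}u_{ijk}\,J\partial^{i} \;=\; \partial_{i}\bigl(\log|\det\mathrm{Hess}\,u|\bigr)\,J\partial^{i} \;=\; J\nabla^{f^{*}G}\theta.
\end{equation*}
Thus $H\equiv 0$ is equivalent to $\theta$ constant, which is the Monge--Amp\`ere equation \eqref{e:minimalcondition}, while the stronger condition $h\equiv 0$ forces $u_{ijk}=0$, so $u$ is quadratic and of the form \eqref{e:totallycondition}; the positivity $\det a_{ij}>0$ is what makes $f$ an immersion, and in that case $f^{*}G$ has constant coefficients and is automatically flat. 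Part (2) is then an immediate corollary of the same identity: the Hamiltonian-minimality criterion $\mathrm{div}^{f^{*}G}(JH)=0$ of \cite{Oh} becomes $\Delta_{f^{*}G}\theta=0$, as claimed.

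For part (1), the induced metric is the Hessian metric $u_{ij}$, whose Levi-Civita connection has Christoffel symbols $\Gamma^{k}_{ij}=\tfrac{1}{2}u^{kl}u_{ijl}$, and whose Riemann tensor is a universal polynomial in $u^{ij}$ and the third derivatives $u_{ijk}$. The assumption that $u$ is functionally related of second order gives an algebraic dependence among the entries of $\mathrm{Hess}\,u$ which, upon differentiation, furnishes enough linear relations among the $u_{ijk}$ to make the Riemann tensor vanish identically. The main obstacle I anticipate is the bookkeeping step in (3) -- writing the ambient Levi-Civita connection of $G$ in canonical coordinates cleanly enough that the clean formula $h(\partial_{i},\partial_{j})=u_{ijk}\,J\partial_{k}$ drops out -- together with the curvature computation in (1); by contrast the Lagrangian-angle identity $JH=\nabla^{f^{*}G}\theta$ is an essentially algebraic consequence of the paracomplex structure exchanging horizontal and vertical, and the totally geodesic case is pure substitution.
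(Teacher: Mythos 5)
Your setup and your treatment of parts (2) and (3) follow the paper's proof essentially verbatim: the induced metric is $2\,\mathrm{Hess}(u)$, the second fundamental form is $\bar B(f_i,f_j)=-\sum_{l,k}g^{lk}u_{ijl}\,Jf_k$ (you drop the sign and the inverse-metric factor, but nothing downstream is affected), Jacobi's formula converts the trace into $J{\mathbb H}=\nabla\log|\det\mathrm{Hess}\,u|^{-2}$, and minimality, total geodesy and Hamiltonian minimality all read off from this identity exactly as in the paper.

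The one genuine gap is in part (1), and it is precisely the step you flag as an anticipated obstacle. Saying that functional relatedness ``furnishes enough linear relations among the $u_{ijk}$ to make the Riemann tensor vanish'' is not yet an argument: differentiating $F(u_{x_{i}x_{s}},u_{x_{j}x_{t}})=0$ only tells you that the gradients $\nabla u_{x_ix_s}$ and $\nabla u_{x_jx_t}$ are pointwise linearly dependent (with variable coefficients $F_1,F_2$), and by itself this does not force an arbitrary polynomial in the $u_{ijk}$ to vanish. What makes the argument work --- and what the paper makes explicit --- is that the ambient metric $G$ on $T{\mathbb R}^n$ is flat, so the Gauss equation expresses the intrinsic curvature of $f^{*}G$ purely as a quadratic in $\bar B$, and the resulting expression
\[
G(\bar R(f_i,f_j)f_k,f_l)=\sum_{s,t}g^{st}\bigl(\partial_{x_k}u_{x_ix_s}\,\partial_{x_l}u_{x_jx_t}-\partial_{x_l}u_{x_ix_s}\,\partial_{x_k}u_{x_jx_t}\bigr)
\]
is a sum of $2\times 2$ Jacobian determinants of pairs of second derivatives. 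Each such Jacobian is exactly the quantity annihilated by the pairwise functional dependence, so the curvature vanishes term by term. Without identifying this specific structure, part (1) is not proved. (A cosmetic slip: $\det a_{ij}>0$ is not what makes $f$ an immersion --- a graph is always immersed --- it is the nondegeneracy condition on the induced metric consistent with $c_0>0$ in \eqref{e:minimalcondition}.)
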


As an example of Theorem \ref{t:oneofthemainthe}, we focus our attention to the graph of a vector field in ${\mathbb R}^n$ of spherical symmetry, called source fields (see Definition \ref{d:source}). Such graphs are Lagrangian submanifolds in $T{\mathbb R}^n$. 

\begin{theo}\label{t:sourcefields}
Suppose that $f:{\mathbb R}^n-\{0\}\rightarrow T({\mathbb R}^n-\{0\}):p\mapsto (p,V(p))$ is the graph of a source field $V(p)=H(R)\,\partial/\partial R$, where $R=|p|$. Then the following two statements hold:
\begin{enumerate}
\item $f$ is minimal if and only if the field intensity $H$ is given by 
\[
H=(c_0R^n+c_1)^{1/n},
\]
where $c_0,c_1$ are constants with $c_0>0$. Furthermore, $f$ is totally geodesic if and only if $c_1=0$.
\item $f$ is Hamiltonian minimal (but non minimal) if and only if the field intensity $H$ is given by 
\[
H=\left(c_2+c_1e^{c_0R}\sum_{k=0}^{n-1}\frac{k!(-1)^k}{c_0^k}{n-1 \choose k}R^{n-k-1}\right)^{1/n},
\]
where $c_0\neq 0, c_1, c_2$ are real constants.
\end{enumerate}
\end{theo}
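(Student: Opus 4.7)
My plan is to reduce this to Theorem~\ref{t:oneofthemainthe} by writing the source field as the gradient of a radial potential. Since $V(p)=H(R)\,\pa/\pa R$ and the one-form $H(R)\,dR$ is closed on ${\mathbb R}^n\setminus\{0\}$, there exists a radial function $U(R)$ with $U'(R)=H(R)$. Setting $u(p):=U(|p|)$, the graph $f$ coincides with the Lagrangian graph of $Du$, so Theorem~\ref{t:oneofthemainthe} applies directly.

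The key preliminary computation is the Hessian of a radial function. A direct calculation gives
\begin{equation*}
\pa_i\pa_j u=\left(U''(R)-\frac{U'(R)}{R}\right)\frac{p_ip_j}{R^2}+\frac{U'(R)}{R}\,\delta_{ij},
\end{equation*}
so $\mbox{Hess}(u)$ has eigenvalue $H'(R)$ along the radial direction and eigenvalue $H(R)/R$ with multiplicity $n-1$ in the tangential directions. Consequently,
\begin{equation*}
\det\mbox{Hess}(u)=H'(R)\left(\frac{H(R)}{R}\right)^{n-1}=\frac{(H^n)'(R)}{n\,R^{n-1}}.
\end{equation*}

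For part (1), Theorem~\ref{t:oneofthemainthe}(3) says $f$ is minimal iff $\det\mbox{Hess}(u)=c_0$, which translates to the ODE $(H^n)'=nc_0R^{n-1}$; one integration yields $H=(c_0R^n+c_1)^{1/n}$. The totally geodesic case is characterized by $u$ being a polynomial of degree at most $2$ in $p$; combined with the radial symmetry this forces $u(p)=\tfrac{a}{2}R^2+\mathrm{const}$, corresponding to $H=aR$, i.e., $c_1=0$.

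For part (2), Theorem~\ref{t:oneofthemainthe}(2) states that $f$ is Hamiltonian minimal iff $\Phi:=\log|\det\mbox{Hess}(u)|$ is harmonic with respect to the induced metric $f^{\ast}G$. By the rotational symmetry of the source field, $f^{\ast}G$ inherits an $SO(n)$-invariant structure, so its Laplacian restricts to a second-order linear operator in $R$ when acting on radial functions. The harmonicity equation, being the divergence of a radial gradient, integrates once to a first-order linear ODE in $H^n$ of the form $(H^n)'=c_0H^n+(\text{polynomial in }R)$. Iterated integration by parts of $R^{n-1}e^{c_0R}$ produces the finite sum $\sum_{k=0}^{n-1}\tfrac{k!(-1)^k}{c_0^k}\binom{n-1}{k}R^{n-k-1}$, yielding the claimed closed-form solution. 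The hypothesis \emph{Hamiltonian minimal but non minimal} forces $c_0\neq 0$, which rules out the polynomial solutions appearing in part (1).

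The main obstacle will be the second part: one must explicitly identify the induced metric $f^{\ast}G$ on a radial Lagrangian graph and compute its Laplacian on radial functions before the first integration becomes available. Once that reduction to the first-order linear ODE is in hand, the remaining closed-form solution follows from the standard integration by parts recurrence for $R^{n-1}e^{c_0R}$.
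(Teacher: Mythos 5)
Your reduction to Theorem \ref{t:oneofthemainthe} via the radial potential $u(p)=U(|p|)$, the eigenvalue computation for $\mbox{Hess}(u)$, and the resulting formula $\det\mbox{Hess}(u)=H'(H/R)^{n-1}=(H^n)'/(nR^{n-1})$ all agree with the paper, and your part (1), including the totally geodesic case, is correct and essentially the paper's argument.

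Part (2), however, has a genuine gap. First, the step you yourself flag as ``the main obstacle'' --- computing the Laplacian of $\log|\det\mbox{Hess}(u)|$ with respect to the induced metric $f^{\ast}G=2\,\mbox{Hess}(u)$ --- is precisely the nontrivial content of this part, and it is left undone. That metric is not conformally Euclidean (its radial and tangential eigenvalues $2H'$ and $2H/R$ differ), so its Laplacian on radial functions is not the standard radial Laplacian; the paper has to compute the inverse metric coefficients $g^{ij}$ explicitly and evaluate $g(\tilde\nabla\Phi,\tilde\nabla\Phi)$ and $\Phi\Delta\Phi$ term by term before any first integration is possible. Second, the intermediate ODE you state is wrong. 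Writing $\Phi=\det\mbox{Hess}(u)$, the harmonicity condition reduces (after that computation) to $\Phi''/\Phi'=\Phi'/\Phi$, i.e.\ $\Phi'=c_0\Phi$, hence $\Phi=ke^{c_0R}$ and $(H^n)'=nkR^{n-1}e^{c_0R}$: the right-hand side is an explicit function of $R$, not of the form $c_0H^n+(\text{polynomial in }R)$ as you claim. Your proposed form would integrate to $H^n=(\text{polynomial in }R)+Ce^{c_0R}$, which is structurally different from the stated answer $H^n=c_2+c_1e^{c_0R}P(R)$ with $P$ a polynomial of degree $n-1$; only the equation $(H^n)'=nkR^{n-1}e^{c_0R}$, integrated by parts, produces that form. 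So your final mechanism (iterated integration by parts of $R^{n-1}e^{c_0R}$) is the right one, but the derivation leading to it is both missing and, as sketched, inconsistent with the answer you are trying to reach.
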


%The following results characterizes mean curvature flow of a graph of a source field.

%\begin{theo}\label{t:meancurvflow}
%Let $f$ be the embedding of the graph in $T(\R^n-\{0\})$ of a source field $V$ in $\R^n$ with mean curvature ${\mathbb H}$. The mean curvature flow of $f$,
%\[
%\frac{\partial f^{\bot}}{\partial t}={\mathbb H},
%\]
%is determined by the single equation:
%\[
%\frac{\partial H}{\partial t}=\frac{2^{n-1}\left((n-1)R(H')^2+RHH''-(n-1)HH'\right)}{RHH'},
%\]
%where $H$ denotes the field intensity of $V$.
%\end{theo}

It is well known that the space ${\mathbb L}({\mathbb R}^3)$ of oriented lines in ${\mathbb R}^3$ is identified with the tangent bundle $T{\mathbb S}^2$. Guilfoyle and Klingenberg in \cite{gk1} and M. Salvai in \cite{Salvai}, studied the geometry of $({\mathbb L}({\mathbb R}^3),{\mathbb G},{\mathbb J})$ derived by the standard K\"ahler structure $(T{\mathbb S}^2,g,J)$. Using this identification we show the following:

\begin{theo}\label{t:minimalembed}
There exists a minimal isometric embedding of $({\mathbb L}({\mathbb R}^3),{\mathbb G})$ in $(T{\mathbb R}^3,G)$. 
\end{theo}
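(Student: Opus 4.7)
The plan is to exhibit an embedding $\phi: {\mathbb L}({\mathbb R}^3) \to T{\mathbb R}^3$ built using the cross product on ${\mathbb R}^3$, verify that $\phi$ is isometric, and then compute its mean curvature vector and show it vanishes. I identify ${\mathbb L}({\mathbb R}^3)$ with $T{\mathbb S}^2 = \{(U, W) \in {\mathbb R}^3 \times {\mathbb R}^3 : |U| = 1, \, U \cdot W = 0\}$, where the pair $(U, W)$ records the unit direction and a cross-product-dual position datum of the line (the moment or the perpendicular foot from the origin, related through $V = U \times W$). The candidate embedding is of the form $\phi(U, W) = (P(U, W), Q(U, W))$, with $P$ and $Q$ constructed from $U$, $W$, and $U \times W$, so that $\phi$ sends a line to a (basepoint, direction) pair in $T{\mathbb R}^3$; a natural starting candidate is $\phi(U, W) = (U \times W, U)$, which sends the line with direction $U$ and moment $W$ to its foot of perpendicular from the origin together with its unit direction.

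For the isometric property $\phi^{\ast} G = {\mathbb G}$, I would express a tangent vector to $T{\mathbb S}^2$ at $(U, W)$ as $(A, B) \in {\mathbb R}^3 \times {\mathbb R}^3$ with $A \cdot U = 0$ and $B \cdot U + W \cdot A = 0$, and decompose it into horizontal and vertical parts relative to the Levi-Civita connection of the round metric: $X_H = A$ and $X_V = B + (W \cdot A) U \in T_U{\mathbb S}^2$. The differential $d\phi(A, B) = (A \times W + U \times B, A)$ (in the candidate case) and the explicit form of $G$ on $T{\mathbb R}^3$ (which pairs horizontal with vertical) reduce $\phi^{\ast} G$ to a bilinear expression in $A_i, B_i, W, U$; the triple-product identities $a \cdot (b \times c) = b \cdot (c \times a) = c \cdot (a \times b)$ and $a \times (b \times c) = b(a \cdot c) - c(a \cdot b)$ should collapse this to the Sasakian expression for ${\mathbb G}$ on $T{\mathbb S}^2$.

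For minimality, the ambient $(T{\mathbb R}^3, G)$ reduces for a flat base to ${\mathbb R}^6$ with a flat neutral metric, so the ambient Levi-Civita connection is ordinary differentiation in the standard coordinates, and the second fundamental form of $\phi({\mathbb L}({\mathbb R}^3))$ is simply the $G$-orthogonal projection of the ordinary second partial derivatives of $\phi$ onto the normal bundle of the image. The problem is $SO(3)$-equivariant under rotations of ${\mathbb R}^3$ and homogeneous under the scalings $x \mapsto \lambda x$ (which act on $G$ as homotheties, preserving minimality); together these reduce the vanishing of the mean curvature vector to a single orbit representative $(U_0, W_0)$. At that point I choose a pseudo-orthonormal basis for the normal bundle adapted to the cross-product structure (spanned by vectors built from $U$ and $U \times W$), compute $\phi_{,\alpha\beta}$, project normally, and trace against the inverse induced metric.

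The principal obstacle will be identifying the precise form of $\phi$ that achieves isometry and minimality simultaneously, and then verifying the resulting trace identity. A naive candidate such as $\phi(U, W) = (W, U)$ is manifestly isometric but a direct second-fundamental-form computation shows its image has non-zero mean curvature; adjusting by cross-product corrections to cancel the trace typically also alters the pullback metric, so a careful balance is required. The cancellation giving $H = 0$ relies on the identity $(U \times \cdot)^2 = -\mathrm{Id}$ on $T_U{\mathbb S}^2$, which realises the natural complex structure on the tangent plane, together with the antisymmetry of the cross product — ingredients available only in dimension three, consistent with the restriction to $T{\mathbb R}^3$ in the statement.
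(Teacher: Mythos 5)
Your overall strategy---a cross-product-built map $T{\mathbb S}^2\to T{\mathbb R}^3$, a pullback computation via triple-product identities, and a second-fundamental-form computation in the flat ambient---is the right one and matches the paper's in spirit, but the proposal stops short of a proof at exactly the point where the content lies: the embedding is never pinned down, and the candidate you lean toward does not quite work as written. The paper's map is $f(p,V)=(p,-p\times V)$: the cross-product twist sits in the \emph{fibre}, and the base point of the image is the direction vector $p\in{\mathbb S}^2\subset{\mathbb R}^3$, not the foot of the line. Your candidate $\phi(U,W)=(U\times W,U)$ differs from $f$ by the swap $(x,y)\mapsto(y,x)$ of $T{\mathbb R}^3={\mathbb R}^3\times{\mathbb R}^3$ together with a sign, and a short check (the terms $\langle A\times W,A'\rangle+\langle A'\times W,A\rangle$ cancel by antisymmetry of the determinant, leaving only the $U\times B$ contributions) gives $\phi^{\ast}G=-{\mathbb G}$: an anti-isometry, not an isometry. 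This is repairable (replace $U\times W$ by $-U\times W$, or precompose with the fibrewise antipodal map, which sends ${\mathbb G}$ to $-{\mathbb G}$), but you neither perform the check nor fix the sign, and you explicitly defer both the isometry verification and the mean-curvature computation to ``a careful balance.'' Moreover, the assertion that the naive $\phi(U,W)=(W,U)$ is ``manifestly isometric'' is false for the metric in the statement: that map pulls $G$ back to the untwisted pairing $\langle KX,\Pi Y\rangle+\langle\Pi X,KY\rangle$, whereas ${\mathbb G}_{(p,V)}(X,Y)=g(KX,p\times\Pi Y)-g(\Pi X,p\times KY)$ carries the rotation $j=p\times\cdot$; encoding that twist is precisely what the cross product in the embedding is for, so a bare swap cannot be isometric for ${\mathbb G}$.

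That said, the two reductions you propose for minimality are sound and, once the map is fixed, would genuinely streamline the paper's argument: since the base is flat, $\overline\nabla$ is ordinary differentiation on ${\mathbb R}^6$ and the second fundamental form is the $G$-normal projection of the ordinary Hessian of the parametrization; and the diagonal $SO(3)$-action combined with fibre scaling $(p,V)\mapsto(p,\lambda V)$ (a homothety of both ${\mathbb G}$ and $G$) is transitive off the zero section, so $H=0$ need only be verified at a single point with $V\neq 0$, the zero section following by continuity. The paper instead evaluates $h$ on an explicit orthogonal frame $E_1,\dots,E_4$ of signature $(+,-,-,+)$ at a general point and treats $V=0$ separately. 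But as submitted, your argument has a genuine gap: the embedding is undetermined (and your best guess is off by a sign), and the decisive computations establishing isometry and $H=0$ are not carried out.
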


The paper is organized as follows. In the next section the new geometry is introduced in the context of almost K\"ahler and almost para-K\"ahler structures. Theorem \ref{t:oneofthema} is proven in section \ref{s3}, while proofs of Theorems \ref{t:geodesics}, \ref{t:ricciflat}, \ref{t:oneofthemainthe} and \ref{t:sourcefields} are contained in sections \ref{s4.1}, \ref{s4.2}, \ref{s4.3}, and \ref{s4.4}, respectively. Theorem \ref{t:minimalembed} is proven in the final section.

\vspace{0.2in}

%%%%%%%%%%%%%%%%%%%%%%%%%%%%%%%%%%%%%%%%%%%%%%%%%%%
\section{Almost (para-) K\"ahler structures on TN}\label{s:intr}
%%%%%%%%%%%%%%%%%%%%%%%%%%%%%%%%%%%%%%%%%%%%%%%%%%%

Let $N$ be an $n$-dimensional differentiable manifold and $\pi:TN\rightarrow N$ be the canonical projection from the tangent bundle $TN$ to $N$. We define the  {\it vertical bundle} $VN$ as the subbundle ${\mbox Ker}(d\pi)$ of $TTN$. 
If $N$ is equipped with an affine connection $D$, then we may define the {\it horizontal bundle} $HN$ of $TTN$ as follows: 

If $\bar X$ is a tangent vector of $TN$ at $(p_0,V_0)$, there exists a curve $a(t)=(p(t),V(t))\subset TN$ such that $a(0)=(p_0,V_0)$ and $a'(0)=\bar X$. Define the connection map (see \cite{Dombrowski} and \cite{Kowalski}, for further details) $K:TTN\rightarrow TN$ by $K\bar X=D_{p'(0)}V(0)$ if $\bar X\neq VN$ (i.e., $p'(0)\neq 0$) and  if $\bar X\in VN$ (in this case, $\bar X$ is said to be a {\it vertical vector field}) then $K\bar X=V'(0)$. The horizontal bundle $HN$ is simply ${\mbox Ker}(K)$ and therefore we obtain the direct sum:
\begin{eqnarray}
TTN=HN\oplus VN&\simeq& TN\oplus TN\nonumber \\
\bar X&\simeq& (\Pi\bar X, K\bar X),\nonumber%\label{e:decomp}
\end{eqnarray}
\begin{prop}\label{p:decomposition}\cite{Kowalski}
Given a vector field $X$ on $(N,D)$ there exist unique vector fields $X^h,X^v$ on $TN$ such that $(\Pi X^h,K X^h)=(X,0)$ and $(\Pi X^v,K X^v)=(0,X)$.  

In addition, if $X,Y$ are vector fields of $N$, we have at $(p,V)\in TN$:
\[
[X^v,Y^v]=0,\quad
[X^h,Y^v]=(D_XY)^v\simeq (0,D_XY),\quad
[X^h,Y^h]\simeq ([X,Y],-R(X,Y)V),
\]
where $R$ denotes the curvature of $D$.
\end{prop}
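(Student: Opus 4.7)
The plan is to prove the proposition in two stages. The first stage is existence and uniqueness of the lifts $X^h$ and $X^v$, which follow immediately from the direct sum decomposition $TTN = HN \oplus VN$ together with the fact that $\Pi$ restricts to a fibrewise isomorphism $HN \to TN$ and $K$ to a fibrewise isomorphism $VN \to TN$. Concretely, I would define $X^v_{(p,V)}$ as the velocity at $t=0$ of the fibre curve $t \mapsto (p, V + tX_p)$, and $X^h_{(p,V)}$ as the velocity at $t=0$ of $t \mapsto (\gamma(t), P_t V)$, where $\gamma$ is any curve in $N$ with $\gamma'(0)=X_p$ and $P_t$ denotes $D$-parallel transport along $\gamma$; smoothness together with the four identities $\Pi X^v = 0$, $K X^v = X$, $\Pi X^h = X$, $K X^h = 0$ are then immediate, and uniqueness follows from the direct sum.

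For the three bracket identities I would adopt induced local coordinates $(x^i, v^i)$ on $TN$, in which the two lifts take the explicit forms
\[
X^v = X^i \frac{\partial}{\partial v^i}, \qquad X^h = X^i \frac{\partial}{\partial x^i} - X^i v^j \Gamma^k_{ij} \frac{\partial}{\partial v^k},
\]
with components $X^i$ depending only on $x$ and $\Gamma^k_{ij}$ the Christoffel symbols of $D$. From these expressions, $[X^v, Y^v] = 0$ is immediate since the fields $\partial/\partial v^i$ commute and the coefficients are $v$-independent. A short calculation then gives
\[
[X^h, Y^v] = \bigl(X^i \partial_i Y^k + X^i Y^j \Gamma^k_{ij}\bigr)\frac{\partial}{\partial v^k} = (D_X Y)^v,
\]
the only non-trivial contributions coming from $X^i \partial_i$ acting on $Y^k$ and from $Y^j \partial/\partial v^j$ picking out the $v^j$ in the vertical part of $X^h$.

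The remaining identity $[X^h, Y^h] \simeq ([X,Y], -R(X,Y)V)$ is the subtle one. Expanding $X^h(Y^h) - Y^h(X^h)$, the $\partial/\partial x^l$-coefficients telescope to $[X,Y]^l$, giving $\Pi[X^h, Y^h] = [X,Y]$. The $\partial/\partial v^n$-coefficients split after antisymmetrisation in $(X,Y)$ into two groups: a first group proportional to $[X,Y]^l v^m \Gamma^n_{lm}$, which combines with the horizontal part to reassemble the full lift $[X,Y]^h$; and a residual group
\[
X^i Y^l v^m\bigl(\partial_l \Gamma^n_{im} - \partial_i \Gamma^n_{lm} + \Gamma^n_{lk}\Gamma^k_{im} - \Gamma^n_{ik}\Gamma^k_{lm}\bigr)\frac{\partial}{\partial v^n}.
\]
The bracketed expression is recognisable as the coordinate form of $-R^n_{mil}$, so the residue equals $-(R(X,Y)V)^n \partial/\partial v^n$, and applying the connection map $K$ gives $K[X^h, Y^h] = -R(X,Y)V$, which is the claim.

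The main obstacle is precisely the bookkeeping for $[X^h, Y^h]$: one must track the several bilinear terms arising in $X^h(Y^h) - Y^h(X^h)$, see which combinations rebuild $[X,Y]^h$, and recognise the remaining antisymmetrised combination of derivatives and quadratic products of Christoffel symbols as the Riemann curvature operator acting on $V$. Once that pattern is identified, the whole argument reduces to a direct coordinate verification.
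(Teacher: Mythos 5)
Your proof is correct: the existence and uniqueness of the lifts from the splitting $TTN=HN\oplus VN$, the coordinate expressions $X^v=X^i\partial/\partial v^i$ and $X^h=X^i\partial/\partial x^i-X^iv^j\Gamma^k_{ij}\partial/\partial v^k$, and the bracket computations (including the recognition of the antisymmetrised Christoffel combination as $-R(X,Y)V$) all check out. The paper itself offers no proof of this proposition, citing it to Kowalski, and your argument is exactly the standard coordinate verification found there, so there is nothing further to compare.
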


If $g$ is a Riemannian metric on $N$, we identify the cotangent bundle $T^{\ast} N$ with $TN$ by the following bundle isomorphism: 
\[
\mathfrak g (p,X)=g_{p}(X,.)\quad\mbox{for\;any}\;\; X\in T_pN.
\]
Using the canonical projection $\pi^{\ast}:T^{\ast}N\rightarrow N$, we define the {\it Liouville form} $\xi\in\Omega^1(T^{\ast}N)$ by:
\[
\xi_{(p,\beta)}(\eta)=\beta(d\pi^{\ast}\eta)\quad\mbox{where,}\;\; \beta\in T^{\ast}_pN\;\;\mbox{and}\;\; \eta\in T_{(p,\beta)}T^{\ast}N.
\]
The derivative of the Liouville form defines a canonical symplectic structure, $\Omega_{\ast}:=-d\xi$, on $T^{\ast}N$ and using the isomorphism $\mathfrak g$ we define a symplectic structure $\Omega$ on $TN$ by $\Omega=\mathfrak g^{\ast}\Omega_{\ast}$.
The symplectic structure $\Omega$ is given by
\[
\Omega(\bar X,\bar Y)=g(K\bar X,\Pi\bar Y)-g(\Pi\bar X,K\bar Y).
\]
An {\it almost complex structure} (resp. {\it almost paracomplex structure} ) in $TN$ is an endomorphism $J$ of $TTN$ such that $J^2\bar X=-\bar X$ (resp. $J^2\bar X=\bar X$ and $J$ is not the identity), for every $\bar X\in TTN$. and is said to be {\it compatible} with $\Omega$ if, $\Omega(J.,J.)=\Omega(.,.)$ (resp. $\Omega(J.,J.)=-\Omega(.,.)$ ).
\begin{prop}\label{p:thespaceofcomple}

Let $(N,g)$ be a Riemannian manifold and let $J_0,J_1,J_2$ be the following $(1,1)$-tensors in $TN$:
\[
J_0\bar X\simeq (K\bar X,\Pi\bar X),\qquad J_1\bar X\simeq (\Pi\bar X,-K\bar X),\qquad J_2\bar X\simeq (-K\bar X,\Pi \bar X).
\]
Then $J_0,J_1$ are almost paracomplex structures on $TN$ while, $J_2$ is an almost complex structure all compatible with $\Omega$. The trio $(J_0,J_1,J_2)$ defines an almot para-quartenionic structure on $TN$.
\end{prop}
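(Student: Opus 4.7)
The plan is to verify each of the assertions directly from the formulas defining $J_0,J_1,J_2$ together with the explicit expression
\[
\Omega(\bar X,\bar Y)=g(K\bar X,\Pi\bar Y)-g(\Pi\bar X,K\bar Y)
\]
for the canonical symplectic form. Since each $J_i$ is given by interchanging and/or negating the horizontal and vertical components $\Pi\bar X$ and $K\bar X$, every claim reduces to a short bookkeeping computation in the splitting $TTN\simeq HN\oplus VN\simeq TN\oplus TN$ provided by the Levi-Civita connection.

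First I would check the algebraic types. Applying $J_0$ twice yields $J_0^2\bar X\simeq(KJ_0\bar X,\Pi J_0\bar X)=(\Pi\bar X,K\bar X)\simeq\bar X$, so $J_0^2=\mathrm{Id}$; similarly $J_1^2=\mathrm{Id}$, and neither is the identity (they swap or negate components). The same computation for $J_2$ gives $J_2^2\bar X\simeq(-KJ_2\bar X,\Pi J_2\bar X)=(-\Pi\bar X,-K\bar X)\simeq-\bar X$, so $J_2^2=-\mathrm{Id}$. Thus $J_0,J_1$ are almost paracomplex and $J_2$ is almost complex.

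Next I would verify compatibility with $\Omega$ by substituting the components into the formula above. For instance
\[
\Omega(J_0\bar X,J_0\bar Y)=g(\Pi\bar X,K\bar Y)-g(K\bar X,\Pi\bar Y)=-\Omega(\bar X,\bar Y),
\]
which is the paracomplex compatibility; an identical manipulation yields $\Omega(J_1\bar X,J_1\bar Y)=-\Omega(\bar X,\bar Y)$. For $J_2$ one computes
\[
\Omega(J_2\bar X,J_2\bar Y)=g(\Pi\bar X,-K\bar Y)-g(-K\bar X,\Pi\bar Y)=\Omega(\bar X,\bar Y),
\]
which is the required complex compatibility.

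Finally, to exhibit the almost para-quaternionic structure I would compute the pairwise compositions and read off the relations. Direct substitution gives
\[
J_0J_1\bar X\simeq(-K\bar X,\Pi\bar X)=J_2\bar X,\qquad J_1J_0\bar X\simeq(K\bar X,-\Pi\bar X)=-J_2\bar X,
\]
and analogously $J_0J_2=-J_2J_0=J_1$ and $J_1J_2=-J_2J_1=-J_0$. Combined with $J_0^2=J_1^2=\mathrm{Id}$ and $J_2^2=-\mathrm{Id}$, these are precisely the defining relations of a para-quaternionic algebra, so $(J_0,J_1,J_2)$ spans an almost para-quaternionic structure on $TN$. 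There is really no conceptual obstacle here; the only care required is in keeping track of signs under the swap and negation operations, which is why I would present each computation explicitly in the $(\Pi,K)$ notation rather than rely on verbal description.
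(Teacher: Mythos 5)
Your proposal is correct and follows essentially the same route as the paper: the paper's proof simply records the identities $J_0^2=J_1^2=\mathrm{Id}$, $J_2^2=-\mathrm{Id}$, $J_0J_1=J_2$, the anticommutation relations, and the compatibility signs $\Omega(J_0\cdot,J_0\cdot)=\Omega(J_1\cdot,J_1\cdot)=-\Omega(J_2\cdot,J_2\cdot)=-\Omega(\cdot,\cdot)$ as "a straightforward computation," which is exactly what you carry out explicitly in the $(\Pi,K)$ components. All of your sign bookkeeping checks out, so your write-up is just a more detailed version of the paper's argument.
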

\begin{proof} A straightforward computation shows that 
\[
J_0^2=J_1^2=\mbox{Id},\qquad J_2^2=-\mbox{Id},\qquad J_0J_1=J_2,
\]
and for any $k\neq l\in\{0,1,2\}$, we have
\[
J_kJ_l=-J_lJ_k.
\]
Furthemore, we have
\[
\Omega(J_0.,J_0.)=\Omega(J_1.,J_1.)=-\Omega(J_2.,J_2.)=-\Omega(.,.),
\]
which shows the compatibility conditions.
\end{proof}

Consider now the metrics $G_0, G_1$ and $G_2$, defined by
\[
G_k(.,.):=\Omega(.,J_k.),\qquad k=0,1,2.
\]
The metric $G_2$ is the Riemannian Sasaki metric
while, $G_0$ is the neutral Sasaki metric. The neutral metric $G_1$ is given by
\begin{equation}{\label{e:theneutralmetric}}
G_1(\bar X,\bar Y)=g(\Pi\bar X,K\bar Y)+g(K\bar X,\Pi\bar Y).
\end{equation}

The Sasaki metrics $G_0$ and $G_2$ are very well known and have been studied extensively by several authors - see for example \cite{BV,gudkappos,yano}.
 In this article we fill the gap by studying the geometry of $(TN,G_1)$. From now on and throughout this article, we simply write $G$ for $G_1$.

\vspace{0.2in}

%%%%%%%%%%%%%%%%%%%%%%%%%%%%%%%%%%%%%%%%%%%%%%%%%%%
\section{Curvature of the neutral metric G}\label{s3}
%%%%%%%%%%%%%%%%%%%%%%%%%%%%%%%%%%%%%%%%%%%%%%%%%%%

Consider the neutral metric $G$ constructed in Section \ref{s:intr}. We now study the main geometric properties of $(TN,G)$.

Denote the Levi-Civita connection of $G$ by $\nabla$. For a vector field $X$ in $N$ we use Proposition \ref{p:decomposition}, to consider the unique vector fields $X^h$ and $X^v$ in $TN$ such that $\Pi X^h=X$, $KX^h=0$ and $\Pi X^v=0$, $KX^v=X$. We do the same for the vector fields $Y,Z$ on $N$. Since all quantities of type $G(Y^h,Z^v)$ are constant on the fibres, we have that $X^vG(Y^h,Z^v)=0$.

Using the Koszul formula:
\begin{eqnarray}
2G(\nabla_{\bar X}\bar Y,\bar Z)&=&\bar X G(\bar Y,\bar Z)+\bar Y G(\bar X,\bar Z)-\bar Z G(\bar X,\bar Y) +G([\bar X,\bar Y],\bar Z )\nonumber \\
&& -G([\bar X,\bar Z],\bar Y)-G([\bar Y,\bar Z],\bar X),\nonumber
\end{eqnarray}
one may obtain the following relations
\begin{equation}\label{e:equation00}
\nabla_{X^v}Y^v=\nabla_{X^v}Y^h=0,\qquad \nabla_{X^h}Y^v\simeq (0,D_XY).
\end{equation}
We now have at $(p,V)\in TN$,
\begin{eqnarray}
2G(\nabla_{X^h}Y^h,Z^h)&=&X^h G(Y^h,Z^h)+Y^h G(X^h,Z^h)-Z^h G(X^h,Y^h) +G([X^h,Y^h],Z^h )\nonumber \\
&& -G([X^h,Z^h],Y^h)-G([Y^h,Z^h],X^h)\nonumber\\
&=& -g(R(X,Y)V,Z)+g(R(X,Z)V,Y)+g(R(Y,Z)V,X)\nonumber
\end{eqnarray}
and using the first Bianchi identity we finally get, 
\begin{equation}\label{e:equation1}
G(\nabla_{X^h}Y^h,Z^h)=g(R(V,X)Y,Z).
\end{equation}
Similar calculations give,
\begin{equation}\label{e:equation2}
G(\nabla_{X^h}Y^h,Z^v)=g(D_XY,Z).
\end{equation}
Using (\ref{e:equation1}) and (\ref{e:equation2}) we obtain 
\begin{equation}\label{e:equation000}
\nabla_{X^h}Y^h(p,V)\simeq (D_XY,R(V,X)Y),
\end{equation}
where $X=\Pi X^h$ and $Y=\Pi Y^h$.
Putting all together we find that
\begin{equation}\label{e:Levi-Civita}
\Pi\nabla_{\bar X}\bar Y(x,V)=D_{\Pi\bar X}\Pi\bar Y,\qquad K\nabla_{\bar X}\bar Y(x,V)=D_{\Pi\bar X}K\bar Y+R(V,\Pi\bar X)\Pi \bar Y.
\end{equation}

\vspace{0.1in}

\begin{prop}\label{p:riemanncur}
The Riemann curvature tensor $\overline{Rm}$ of the metric $G$ is given by 
\begin{eqnarray}
\overline{Rm}(\bar X,\bar Y, \bar Z, \bar W)|_{(p,V)}&=&\mbox{Rm}(K\bar X,\Pi\bar Y,\Pi\bar Z, \Pi\bar W)+\mbox{Rm}(\Pi\bar X, K\bar Y, \Pi\bar Z, \Pi\bar W)\nonumber\\
&&+\mbox{Rm}(\Pi\bar X,\Pi\bar Y,K\bar Z,\Pi\bar W)+\mbox{Rm}(\Pi\bar X, \Pi\bar Y,\Pi\bar Z,K\bar W)\nonumber \\
&&\qquad\qquad+g((D_VR)(\Pi\bar X,\Pi\bar Y)\Pi\bar Z,\Pi\bar W),\nonumber
\end{eqnarray}
where $Rm$ is the Riemann curvature tensor of $g$ and
\[
(D_uR)(v,w)(z)=D_uR(v,w)z-R(D_uv,w)z-R(v,D_uw)z-R(v,w)D_uz
\]
\end{prop}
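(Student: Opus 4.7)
The plan is to compute $\overline{Rm}$ by direct evaluation on horizontal and vertical lifts, starting from the definition
\[
\overline{Rm}(\bar X,\bar Y)\bar Z = \nabla_{\bar X}\nabla_{\bar Y}\bar Z - \nabla_{\bar Y}\nabla_{\bar X}\bar Z - \nabla_{[\bar X,\bar Y]}\bar Z,
\]
and then lowering the last index via $G$, using that $G(\bar A,W^h) = g(K\bar A,W)$ and $G(\bar A,W^v) = g(\Pi\bar A,W)$. Since both sides of the asserted identity are $C^{\infty}(TN)$-linear in each of the four slots, it suffices to check the formula when $\bar X,\bar Y,\bar Z,\bar W$ are horizontal or vertical lifts of vector fields $X,Y,Z,W$ on $N$. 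The antisymmetry of $\overline{Rm}$ in the first two slots and the structural symmetry of the right-hand side reduce the nominally sixteen cases to essentially three.

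First I would dispatch the configurations with two or more verticals among $\bar X,\bar Y,\bar Z$: in every such case (\ref{e:equation00}) and the bracket identities of Proposition \ref{p:decomposition} force $\overline{Rm}(\bar X,\bar Y)\bar Z=0$, and the right-hand side also vanishes because at least two of $K\bar X,K\bar Y,K\bar Z$ are zero while the remaining $\Pi$-slots cannot all simultaneously absorb the single $K\bar W$. The mixed cases with exactly one vertical lift among $\bar X,\bar Y,\bar Z$ are handled by a short expansion using (\ref{e:equation00}) and (\ref{e:Levi-Civita}): for instance, with $\bar Z=Z^v$ and $\bar X,\bar Y$ horizontal, the calculation collapses to $(R(X,Y)Z)^v$, which pairs with $W^h$ to give $\mathrm{Rm}(X,Y,Z,W)$, matching the third term of the statement; the $\bar X=X^v$ and $\bar Y=Y^v$ cases produce the first two $\mathrm{Rm}$ terms in the same way.

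The core of the argument is the all-horizontal case $\overline{Rm}(X^h,Y^h)Z^h$. Iterating $\nabla_{X^h}Y^h\simeq(D_XY,R(V,X)Y)$ via (\ref{e:Levi-Civita}), and treating the $V$-dependent vertical term by the Leibniz rule (together with $\nabla_{\xi^v}(\,\cdot\,)=0$ for any vertical $\xi^v$), one obtains horizontal component $D_XD_YZ$ and vertical component $D_X\bigl(R(V,Y)Z\bigr)+R(V,X)D_YZ$ for $\nabla_{X^h}\nabla_{Y^h}Z^h$. The bracket term $\nabla_{[X^h,Y^h]}Z^h$, using both clauses of (\ref{e:Levi-Civita}) on $[X^h,Y^h]\simeq([X,Y],-R(X,Y)V)$, contributes the horizontal lift of $D_{[X,Y]}Z$ and a vertical $R(V,[X,Y])Z$. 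Antisymmetrising in $X,Y$ and expanding $D_X\bigl(R(V,Y)Z\bigr)$ by Leibniz, the horizontal part reassembles into the classical $R(X,Y)Z$ (giving the fourth $\mathrm{Rm}$ term after pairing with $W^v$), while the vertical part collapses, after invoking the first Bianchi identity to eliminate residual $R(R(X,Y)V,\,\cdot\,)\,\cdot$ terms, into exactly $(D_VR)(X,Y)Z$ in the sense of the proposition; pairing with $W^h$ yields the final $g\bigl((D_VR)(X,Y)Z,W\bigr)$ term.

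The main obstacle is the bookkeeping in this last case, where several $R$-terms with different index arrangements appear in intermediate steps and cancel only after the antisymmetry of $R$ and the first Bianchi identity are brought to bear. A secondary notational subtlety is that lowering the last index via $G$ exchanges the roles of $\Pi$ and $K$ on the fourth slot, which is why the statement circulates a single $K$ through all four positions while keeping the derivative-of-curvature term purely horizontal.
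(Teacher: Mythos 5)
Your overall strategy---reduce to horizontal and vertical lifts, compute $\overline R(\bar X,\bar Y)\bar Z$ from (\ref{e:equation00}) and (\ref{e:Levi-Civita}), and lower the last index with $G$, which swaps the roles of $\Pi$ and $K$ in the fourth slot---is exactly the paper's, and your treatment of the mixed and doubly-vertical configurations is correct. The gap sits in the all-horizontal case, at the very step you identify as the core. After antisymmetrising in $X,Y$, expanding $D_X\bigl(R(V,Y)Z\bigr)$ by Leibniz, and using torsion-freeness to absorb $R(V,D_XY-D_YX-[X,Y])Z$, the vertical component of $\overline R(X^h,Y^h)Z^h$ reduces to $(D_XR)(V,Y)Z-(D_YR)(V,X)Z$, a combination of derivatives of the curvature tensor in the \emph{base} directions $X$ and $Y$. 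Converting this into $(D_VR)(X,Y)Z$, a derivative in the \emph{fibre} direction $V$, requires the second Bianchi identity $(D_XR)(Y,V)+(D_YR)(V,X)+(D_VR)(X,Y)=0$, which is precisely what the paper invokes and which is absent from your outline. The first Bianchi identity, being a purely algebraic identity on $R$, cannot perform this conversion, so as written your recipe stalls one step short of the stated formula.

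Relatedly, the terms you propose to cancel with the first Bianchi identity, of the form $R(R(X,Y)V,\,\cdot\,)\,\cdot$, never arise for this metric: the vertical part $-R(X,Y)V$ of $[X^h,Y^h]$ contributes nothing to $\nabla_{[X^h,Y^h]}Z^h$, since $\nabla_{\xi^v}Z^h=0$ by (\ref{e:equation00}). (Such terms, and the use of the first Bianchi identity to remove them, belong to the analogous computation for the Sasaki metric; here the connection is simpler and they are absent.) One further caution: your parenthetical claim that $\nabla_{\xi^v}(\,\cdot\,)=0$ for every vertical $\xi^v$ holds only on lifts of fixed fields on $N$. Applied to $\nabla_{Y^h}Z^h$, whose vertical part $R(V,Y)Z$ depends on the fibre point, the derivative $\nabla_{X^v}$ produces $(0,R(X,Y)Z)$; this is exactly how the term $\mbox{Rm}(K\bar X,\Pi\bar Y,\Pi\bar Z,\Pi\bar W)$ arises, so the blanket vanishing statement would erase the first term of the proposition if taken literally.
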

 \begin{proof}
 Using the second Bianchi identity, we have at the point $(p,V)\in TN:$
 \begin{eqnarray}
 \overline R(\bar X,\bar Y)\bar Z&=&\nabla_{\bar X}\nabla_{\bar Y}\bar Z-\nabla_{\bar Y}\nabla_{\bar X}\bar Z-\nabla_{[\bar X,\bar Y]}\bar Z\nonumber\\
&\simeq&(D_{\Pi\bar X}D_{\Pi\bar Y}\Pi\bar Z-D_{\Pi \bar Y}D_{\Pi \bar X}\Pi\bar Z-D_{[\Pi\bar X,\Pi\bar Y]}\Pi\bar Z,\nonumber\\
&&\quad D_{\Pi\bar X}D_{\Pi\bar Y}K\bar Z-D_{\Pi \bar Y}D_{\Pi \bar X}K\bar Z-D_{[\Pi\bar X,\Pi\bar Y]}K\bar Z+D_{\Pi\bar X}R(V,\Pi\bar Y)\Pi\bar Z\nonumber\\
&& \qquad+R(V,\Pi\bar X)D_{\Pi\bar Y}\Pi\bar Z)-D_{\Pi\bar Y}R(V,\Pi\bar X)\Pi\bar Z-R(V,\Pi\bar Y)D_{\Pi\bar X}\Pi\bar Z\nonumber \\
&& \qquad\qquad\qquad-R(V,D_{\Pi\bar X}\Pi\bar Y)\Pi\bar Z+R(V,D_{\Pi\bar Y}\Pi\bar X)\Pi\bar Z)\nonumber\\
&=& (R(\Pi\bar X,\Pi\bar Y)\Pi\bar Z,\,R(\Pi\bar X,\Pi\bar Y)K\bar Z +(D_VR)(\Pi\bar X,\Pi\bar Y)(\Pi\bar Z)\nonumber\\
&&\qquad\qquad+R(D_{\Pi\bar X}V,\Pi\bar Y)\Pi\bar Z-R(D_{\Pi\bar Y}V,\Pi\bar X)\Pi\bar Z).\nonumber
\end{eqnarray}
The proposition follows by using the fact that
\[
Rm(X,Y,Z,W)=g(R(X,Y)Z,W).
\]
\end{proof}

\vspace{0.1in}

We are now in position to calculate the Ricci tensor:
\begin{prop}\label{p:ricci}
The Ricci tensor $\overline{Ric}$ of the metric $G$ is given by 
\[
\overline{Ric}(\bar X,\bar Y)=2Ric(\Pi\bar X,\Pi\bar Y),
\]
where $Ric$ denotes the Ricci tensor of $g$.
\end{prop}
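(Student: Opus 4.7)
The plan is to compute $\overline{Ric}$ directly from the curvature formula of Proposition \ref{p:riemanncur} by choosing a frame well adapted to the block structure of $G$. Fix $(p,V)\in TN$, pick a local $g$-orthonormal frame $\{e_1,\dots,e_n\}$ around $p\in N$, and lift it to the frame $\{e_1^h,\dots,e_n^h,e_1^v,\dots,e_n^v\}$ on $TN$. By the defining formula (\ref{e:theneutralmetric}) one has $G(e_i^h,e_j^v)=\delta_{ij}$ and $G(e_i^h,e_j^h)=G(e_i^v,e_j^v)=0$, so the matrix of $G$ in this frame is block anti-diagonal with identity blocks and is its own inverse. It follows that the Ricci trace reduces to
\[
\overline{Ric}(\bar X,\bar Y)=\sum_{i=1}^{n}\left[\overline{Rm}(e_i^h,\bar X,\bar Y,e_i^v)+\overline{Rm}(e_i^v,\bar X,\bar Y,e_i^h)\right].
\]

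The heart of the argument is to expand each summand using Proposition \ref{p:riemanncur} together with the identities $\Pi e_i^h=e_i$, $K e_i^h=0$, $\Pi e_i^v=0$ and $K e_i^v=e_i$. In the first trace the last argument is $e_i^v$, so every term of the curvature formula that carries $\Pi\bar W$ in the fourth slot (the first three terms and the $(D_VR)$-term) is killed by $\Pi e_i^v=0$; only the term with $K\bar W$ survives, contributing $\mathrm{Rm}(e_i,\Pi\bar X,\Pi\bar Y,e_i)$. In the second trace the first argument is $e_i^v$, and an entirely symmetric reasoning shows that only the term carrying $K\bar X$ in the first slot survives, yielding the same expression $\mathrm{Rm}(e_i,\Pi\bar X,\Pi\bar Y,e_i)$.

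Summing the two contributions over $i$ and recognising $\sum_{i=1}^{n}\mathrm{Rm}(e_i,\Pi\bar X,\Pi\bar Y,e_i)=Ric(\Pi\bar X,\Pi\bar Y)$ produces the claimed identity $\overline{Ric}(\bar X,\bar Y)=2\,Ric(\Pi\bar X,\Pi\bar Y)$. No serious analytical obstacle is expected: the whole argument is careful bookkeeping built on the vanishing patterns of $\Pi$ and $K$ applied to the lifted basis. The structural point worth emphasising is that the anti-diagonal shape of $[G]$ forces one to pair each $e_i^h$ with $e_i^v$ in the trace, and this is precisely why $\overline{Ric}$ depends only on the horizontal projections of its arguments and why the covariant-derivative-of-curvature term contributes nothing.
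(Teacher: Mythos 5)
Your proposal is correct and follows essentially the same route as the paper: the same lifted frame $\{e_i^h,e_i^v\}$, the same observation that $[G]$ is anti-diagonal and self-inverse so the trace pairs $e_i^h$ with $e_i^v$, and the same term-by-term elimination in Proposition \ref{p:riemanncur} using the vanishing of $\Pi$ on vertical and $K$ on horizontal lifts. The only cosmetic difference is that you treat general $\bar X,\bar Y$ in a single computation (with the frame vectors in the first and fourth slots), while the paper first notes $\overline{Ric}(X^v,Y^v)=\overline{Ric}(X^h,Y^v)=0$ and then evaluates $\overline{Ric}(X^h,Y^h)$.
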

\begin{proof}
Following a similar method as in the proof of the main theorem of \cite{AnaR}, we consider the orthonormal frame $(e_1,\ldots,e_n)$ of $(N,g)$. Define the following frame $(\bar e_1,\ldots\bar e_n,\bar e_{n+1},\ldots \bar e_{2n})$ of $TN$ to be the unique vector fields such that
\[
\Pi\bar e_k=e_k,\qquad K\bar e_k=0
\]
\[
\Pi\bar e_{n+k}=0,\qquad K\bar e_{n+k}=e_k
\]
For $i,j=1,\ldots,n$ we have $G_{ij}=0$, and $G_{i,n+j}=\delta_{ij}$.
Observe that $\overline{Ric}(X^v,Y^v)=\overline{Ric}(X^h,Y^v)=0$. Using Proposition \ref{p:riemanncur}, we have
\begin{eqnarray}
\overline{Ric}(X^h,Y^h)&=&\sum_{i,j=1}^n G^{i,n+j}(\overline{Rm}(X^h,\bar e_i,Y^h,\bar e_{n+j})+\overline{Rm}(X^h,\bar e_{n+j},Y^h,\bar e_i))\nonumber\\
&=&\sum_{i,j=1}^n \delta_{ij}(\overline{Rm}(X^h,\bar e_i,Y^h,\bar e_{n+j})+\overline{Rm}(X^h,\bar e_{n+j},Y^h,\bar e_i))\nonumber\\
&=&\sum_{i,j=1}^n \delta_{ij}(Rm(X,e_i,Y,e_{j})+Rm(X,e_{j},Y,e_i))\nonumber\\
&=&2\sum_{i=1}^n Rm(X,e_i,Y,e_i)\nonumber\\
&=&2Ric(X,Y),\nonumber
\end{eqnarray}
and this completes the Proposition. 
\end{proof}

\vspace{0.1in}

\begin{proof}[{\bf Proof of Theorem \ref{t:oneofthema}}]

1. For the first part consider, as before, the frame $(\bar e_1,\ldots\bar e_n,\bar e_{n+1},\ldots \bar e_{2n})$ of $TN$. The scalar curvature $\bar S$ of the metric $G$ is
\[
\bar S=\sum_{i,j=1}^{2n}G^{ij}\overline{Ric}(\bar e_i,\bar e_j)=\sum_{i,j=1}^{2n}G^{ij}Ric(\Pi\bar e_i,\Pi\bar e_j)=0.
\]
2. The second part follows directly from the Proposition \ref{p:ricci}. 

3. We now prove the third part. Let $\overline W$ be the Weyl tensor of $G$. Using Proposition 5 of \cite{AnaR} we have
\[
\overline W(\bar X,\bar Y,\bar Z,\bar W)=\overline{Rm}(\bar X,\bar Y,\bar Z,\bar W)-\frac{\mbox{Ric}(\Pi\bar Y,\Pi\bar W)G(\bar X,\bar Z)+\mbox{Ric}(\Pi\bar X,\Pi\bar Z)G(\bar Y,\bar W)}{n-1}
\]
\begin{equation}\label{e:weyltensor}
\qquad\qquad +\frac{\mbox{Ric}(\Pi\bar Y,\Pi\bar W)G(\bar X,\bar Z)+\mbox{Ric}(\Pi\bar X,\Pi\bar Z)G(\bar Y,\bar W)}{n-1}.
\end{equation}
Assume that $G$ is locally conformally flat and $n\geq 3$. We then have
\[
\overline{Rm}(\bar X,\bar Y,\bar Z,\bar W)=\frac{1}{n-1}\left(\mbox{Ric}(\Pi\bar Y,\Pi\bar W)G(\bar X,\bar Z)+\mbox{Ric}(\Pi\bar X,\Pi\bar Z)G(\bar Y,\bar W)\right)
\]
\begin{equation}\label{e:riemcurrel}
-\frac{1}{n-1}\left(\mbox{Ric}(\Pi\bar Y,\Pi\bar Z)G(\bar X,\bar W)+\mbox{Ric}(\Pi\bar X,\Pi\bar W)G(\bar Y,\bar Z)\right)
\end{equation}
Let $X,Y,Z,W$ be vector fields on $N$ with corresponding unique vector fields $X^h,X^v$, $Y^h,Y^v,Z^h,Z^v,W^h,W^v$ on $TN$.
Using Proposition \ref{p:riemanncur}, we have
\[
\overline{Rm}(X^h,Y^h,Z^h,W^v)=Rm(X,Y,Z,W),
\]
and thus, (\ref{e:riemcurrel}) becomes,
\[
Rm(X,Y,Z,W)=-\frac{1}{n-1}(\mbox{Ric}(X,Z)g(Y,W)-\mbox{Ric}(Y,Z)g(X,W))
\]
We now have,
\[
Rm(X,Y,X,Y)=-\frac{1}{n-1}(\mbox{Ric}(X,X)|Y|^2-\mbox{Ric}(X,Y)g(X,Y)).
\]
On the other hand,
\[
Rm(Y,X,Y,X)=-\frac{1}{n-1}(\mbox{Ric}(Y,Y)|X|^2-\mbox{Ric}(X,Y)g(X,Y)).
\]
which implies
\[
\frac{\mbox{Ric}(X,X)}{|X|^2}=\frac{\mbox{Ric}(Y,Y)}{|Y|^2},
\]
for any vector fields $X,Y$ on $N$. Thus, there exists a smooth function $\lambda$ on $N$ such that
\[
\mbox{Ric}(X,X)=\lambda |X|^2, 
\]
which shows that $g$ is Einstein. Since $n\geq 3$, the function $\lambda$ must be constant. Let $P$ be the plane spanned by $\{e_1,e_2\}$. Then the sectional curvature 
\[
K(P)=Rm(e_1,e_2,e_2,e_1)=\frac{\lambda}{n-1},
\]
which is constant. 

Assume the converse, that is, $n\geq 3$ and $g$ is of constant sectional curvature $K$. Thus, 
\begin{equation}\label{e:sectionalscalar}
K=\frac{R}{n(n-1)},
\end{equation}
where $R$ denotes the scalar curvature. Also $g$ is locally symmetric and therefore from Proposition \ref{p:riemanncur} we have
\begin{eqnarray}
\overline{Rm}(\bar X,\bar Y, \bar Z, \bar W)&=&\mbox{Rm}(K\bar X,\Pi\bar Y,\Pi\bar Z, \Pi\bar W)+\mbox{Rm}(\Pi\bar X, K\bar Y, \Pi\bar Z, \Pi\bar W)\nonumber\\
&&+\mbox{Rm}(\Pi\bar X,\Pi\bar Y,K\bar Z,\Pi\bar W)+\mbox{Rm}(\Pi\bar X, \Pi\bar Y,\Pi\bar Z,K\bar W)\nonumber 
\end{eqnarray}
Hence, $\overline{Rm}(X^h,Y^h,Z^h,W^h)=0$ and using (\ref{e:weyltensor}) we have,
\[
\overline W(X^h,Y^h,Z^h,W^h)=-\frac{1}{n-1}\left(\mbox{Ric}(Y,W)G(X^h,Z^h)+\mbox{Ric}(X,Z)G(Y^h,W^h)\right)
\]
\[
+\frac{1}{n-1}\left(\mbox{Ric}(Y,Z)G(X^h,W^h)+\mbox{Ric}(X,W)G(Y^h,Z^h)\right)=0.
\]
Also,
\[
\overline W(X^h,Y^h,Z^h,W^v)=\overline{Rm}(X^h,Y^h,Z^h,W^v)-\frac{1}{n-1}\left(\mbox{Ric}(X,Z)G(Y^h,W^v)\right)
\]
\[
+\frac{1}{n-1}\left(\mbox{Ric}(Y,Z)G(X^h,W^v)\right)
\]
\[
=Rm(X,Y,Z,W)-\frac{1}{n-1}\left(\mbox{Ric}(X,Z)g(Y,W)
-\mbox{Ric}(Y,Z)G(X,W)\right)
\]
\[
=Rm(X,Y,Z,W)-\frac{R}{n(n-1)}\left(g(X,Z)g(Y,W)
-g(Y,Z)G(X,W)\right).
\]
Since $g$ is of constant sectional curvature $K$, we have
\[
Rm(X,Y,Z,W)=K(g(X,Z)g(Y,W)
-g(Y,Z)G(X,W)),
\]
and therefore, using (\ref{e:sectionalscalar}) we have
\[\overline W(X^h,Y^h,Z^h,W^v)=\left(K-\frac{R}{n(n-1)}\right)\left(g(X,Z)g(Y,W)
-g(Y,Z)G(X,W)\right)=0.
\]
Similarly, we prove that all coefficients of the Weyl tensor vanish and  that $\overline W=0$. Thus, for $n\geq 3$, the metric $G$ is locally conformally flat if and only if $g$ is of constant sectional curvature. 

For $n=2$, the Riemann curvature tensor is given by
\[
Rm(X,Y,Z,W)=K(g(X,Z)g(Y,W)
-g(Y,Z)G(X,W)),
\]
where $K$ is the Gauss curvature of $g$. Hence, following a similar argument as before, one can prove that for every 2-manifold $(N,g)$ the neutral metric $G$ of $TN$ is locally conformally flat.

4. We now proceed with the last part of the proof. Assume first that $g$ is locally symmetric. Then for any vector fields $\xi, X,Y,Z$ on $N$ we have, by definition, 
\begin{equation}\label{e:localsymmetry}
D_{\xi}(R(X,Y)Z)=R(D_{\xi}X,Y)Z+R(X,D_{\xi}Y)Z+R(X,Y)D_{\xi}Z.
\end{equation}
Using (\ref{e:localsymmetry}), a brief computation shows
\[
R(\xi,V)(R(X,Y)Z)=R(R(\xi,V)X,Y)Z+R(X,R(\xi,V)Y)Z+
\]
\begin{equation}\label{e:localsymmetry1}
\qquad\qquad +R(X,Y)(R(\xi,V)Z).
\end{equation}
We want to prove that $G$ is locally symmetric, that is, $\nabla\bar{R}=0$. Proposition \ref{p:riemanncur} tells us that
\[
\bar R(\bar X,\bar Y) \bar Z|_{(p,V)} \simeq (R(\Pi\bar X,\Pi\bar Y)\Pi\bar Z,R(K\bar X,\Pi\bar Y)\Pi\bar Z+R(\Pi\bar X,K\bar Y)\Pi\bar Z+R(\Pi\bar X,\Pi\bar Y)K\bar Z
\]
\[
+(D_VR)(\Pi\bar X,\Pi\bar Y)\Pi\bar Z),
\]
and using the fact that $g$ is locally symmetric we have
\[
\bar R(\bar X,\bar Y) \bar Z\simeq (R(\Pi\bar X,\Pi\bar Y)\Pi\bar Z,R(K\bar X,\Pi\bar Y)\Pi\bar Z+R(\Pi\bar X,K\bar Y)\Pi\bar Z+R(\Pi\bar X,\Pi\bar Y)K\bar Z.
\]
We thus obtain the following
\[
\bar R(X^v,Y^v)Z^v=\bar R(X^v,Y^v)Z^h=\bar R(X^h,Y^v)Z^v=0,
\]
\[
\bar R(X^v,Y^h)Z^h\simeq (0,R(X,Y)Z),
\]
and 
\[
\bar R(X^h,Y^h)Z^h\simeq (R(X,Y)Z,0).
\]
Applying the relations (\ref{e:equation00}) and (\ref{e:equation000}) we get
\[
\nabla_{\xi^v}(\bar R(X^v,Y^h)Z^h)=\nabla_{\xi^h}(\bar R(X^v,Y^h)Z^h)=0,
\]
\[
\nabla_{\xi^v}(\bar R(X^v,Y^h)Z^h)\simeq (0,D_{\xi}(R(X,Y)Z)),
\]
\[
\nabla_{\xi^h}(\bar R(X^h,Y^h)Z^h)\simeq (D_{\xi}(R(X,Y)Z),R(V,\xi)(R(X,Y)Z)),
\]
\[
\bar R(\nabla_{\xi^h}X^h,Y^h)Z^h\simeq (R(D_{\xi}X,Y)Z,R(R(V,\xi)X,Y)Z),
\]
\[
\bar R(X^h,\nabla_{\xi^h}Y^h)Z^h\simeq (R(X,D_{\xi}Y)Z,R(X,R(V,\xi)Y)Z),
\]
\[
\bar R(X^h,Y^h)\nabla_{\xi^h}Z^h\simeq (R(X,Y)D_{\xi}Z,R(X,Y)R(V,\xi)Z),
\]
We now use all relations above and together with (\ref{e:localsymmetry1}) finally obtain,
\begin{eqnarray}
(\nabla_{\xi^h}\bar R)(X^h,Y^h,Z^h)&=&\nabla_{\xi^h}(\bar R(X^h,Y^h)Z^h)-\bar R(\nabla_{\xi^h}X^h,Y^h)Z^h-\bar R(X^h,\nabla_{\xi^h}Y^h)Z^h\nonumber \\
&&\qquad\qquad\qquad\qquad\qquad -\bar R(X^h,Y^h)\nabla_{\xi^h}Z^h\nonumber \\
&=& 0.\nonumber
\end{eqnarray}
Similar arguments shows that this relation holds:
\[
(\nabla_{\xi^v}\bar R)(X^h,Y^h,Z^h)=0,
\]
showing that $\nabla\bar R=0$, which means  $G$ is locally symmetric.

Conversely, assume that $G$ is locally symmetric. Then the following holds true:
\[
\nabla_{\xi^h}(\bar R(X^h,Y^h)Z^v)=\bar R(\nabla_{\xi^h}X^h,Y^h)Z^v+\bar R(X^h,\nabla_{\xi^h}Y^h)Z^v+\bar R(X^h,Y^h)\nabla_{\xi^h}Z^v,
\]
implying, 
\[
D_{\xi}(R(X,Y)Z)=R(D_{\xi}X,Y)Z+R(X,D_{\xi}Y)Z+R(X,Y)D_{\xi}Z,
\]
which means that $g$ is locally symmetric, completing the proof of the Theorem.
\end{proof}

\vspace{0.1in}

Suppose that the manifold $N$ is equipped with a K\"ahler structure $(j,g)$. An almost complex structure ${\mathbb J}$ on $TN$ can be defined by 
\[
{\mathbb J}\bar X=(j\Pi\bar X,jK\bar X).
\]
It has been proved that ${\mathbb J}$ is integrable and one can check easily that is compatible with $G$, that is,
\[
G({\mathbb J}.,{\mathbb J}.)=G(.,.).
\]
It can be easily proved that ${\mathbb J}$ is also parallel with respect to $\nabla$. Namely,
\begin{eqnarray}
\nabla_{\bar X}{\mathbb J}\bar Y(p,V)&=&(D_{\Pi\bar X}j\Pi\bar Y,D_{\Pi\bar X}jK\bar Y+R(V,\Pi\bar X)j\Pi \bar Y)\nonumber \\
&=&(jD_{\Pi\bar X}\Pi\bar Y,j(D_{\Pi\bar X}K\bar Y+R(V,\Pi\bar X)\Pi \bar Y))\nonumber \\
&=&{\mathbb J}(D_{\Pi\bar X}\Pi\bar Y,D_{\Pi\bar X}K\bar Y+R(V,\Pi\bar X)\Pi \bar Y)\nonumber \\
&=&{\mathbb J}\nabla_{\bar X}\bar Y(p,V).\nonumber
\end{eqnarray}

%Using the Proposition \ref{p:complexstr} one can define another symplectic structure $\overline\Omega$ on $TN$ by letting
%\[\overline\Omega(.,.)=G(.,{\mathbb J}.),\]
%so that $(G,\overline\Omega,{\mathbb J})$ defines a K\"ahler structure.

The complex structure ${\mathbb J}$ is compatible with $\Omega$ and together with the metric ${\mathbb G}$ given by
\[
{\mathbb G}=\Omega({\mathbb J}.,.)
\]
defines another K\"ahler structure $({\mathbb G},\Omega,{\mathbb J})$ on $TN$ which it has been introduced by H. Anciaux and R. Pascal in \cite{AnaR}. In particular,
\begin{equation}\label{e:henrimetric}
{\mathbb G}(\bar X,\bar Y)=g(K\bar X,J\Pi\bar Y)-g(\Pi\bar X,JK\bar Y).
\end{equation}

%Following similar arguments with the proof of Proposition \ref{p:lagrangraphgradi}, one can easily obtain:

%\vspace{0.1in}
%\begin{prop}
%Assume that the manifold $N$ admits a K\"ahler structure and let $V$ be a vector field. The graph $f:{\mathbb N}\rightarrow T{\mathbb N}:p\mapsto (p,V(p)),$ is Lagrangian if and only if $V$ is a Hamiltonian vector field, that is, $V={\mathbb J}D u$.
%\end{prop}

%\vspace{0.1in}

The following Proposition shows that the neutral metric $G$ is an extension of ${\mathbb G}$ for the non-K\"ahler structures.

%\vspace{0.1in}

\begin{prop}\label{p:isometricmetrics}
The metrics $G$ and ${\mathbb G}$, defined respectively in (\ref{e:theneutralmetric}) and
(\ref{e:henrimetric}), are isometric.
\end{prop}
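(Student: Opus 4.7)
The plan is to exhibit an explicit diffeomorphism $F:TN\to TN$ covering the identity of $N$ whose differential intertwines the two metrics. The only extra piece of structure beyond the Riemannian data is the K\"ahler complex structure $j$ on $N$, and both metrics differ only by an insertion of $j$ in the vertical slot, so the natural candidate is the fibrewise map
\[
F:TN\to TN,\qquad (p,V)\mapsto (p,-jV),
\]
which is a diffeomorphism with inverse $(p,V)\mapsto (p,jV)$. The content of the proposition then reduces to verifying the pointwise identity $F^{*}G=\mathbb{G}$.

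First I would compute $dF$ in the horizontal/vertical splitting. Given $\bar X\in T_{(p,V)}TN$, pick any curve $a(t)=(p(t),V(t))$ with $a(0)=(p,V)$ and $a'(0)=\bar X$, so that $\Pi\bar X=p'(0)$ and $K\bar X=D_{p'(0)}V$. Then $F\circ a(t)=(p(t),-jV(t))$, which immediately gives $\Pi(dF\bar X)=p'(0)=\Pi\bar X$, and, using the K\"ahler condition $Dj=0$ to commute $j$ through the covariant derivative,
\[
K(dF\bar X)=D_{p'(0)}(-jV)=-j\,D_{p'(0)}V=-jK\bar X.
\]
Thus $dF\bar X\simeq(\Pi\bar X,-jK\bar X)$. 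Substituting this into the formula (\ref{e:theneutralmetric}) for $G$ and using the skew-adjointness of $j$ with respect to $g$ (which follows automatically from $g(j\cdot,j\cdot)=g(\cdot,\cdot)$), one obtains
\[
(F^{*}G)(\bar X,\bar Y)=g(\Pi\bar X,-jK\bar Y)+g(-jK\bar X,\Pi\bar Y)=-g(\Pi\bar X,jK\bar Y)+g(K\bar X,j\Pi\bar Y),
\]
which is exactly $\mathbb{G}(\bar X,\bar Y)$ by definition (\ref{e:henrimetric}).

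The only delicate step is the computation of $K(dF\bar X)$: the possibility of pulling $j$ through $D_{p'(0)}$ is precisely where the K\"ahler hypothesis $Dj=0$ is used. Without it, an obstruction term $(D_{p'(0)}j)V$ would appear in the vertical component of $dF\bar X$ and the identity $F^{*}G=\mathbb{G}$ would break down. In the K\"ahler setting assumed here this term vanishes, and I do not anticipate any other obstacle; the remainder is a direct algebraic verification.
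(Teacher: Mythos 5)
Your proposal is correct and follows essentially the same route as the paper: the authors use exactly the fibrewise map $(p,V)\mapsto(p,-jV)$, note that $\Pi df(\bar X)=\Pi\bar X$ and $Kdf(\bar X)=-jK\bar X$ by parallelism of $j$, and conclude by the same algebraic manipulation using skew-adjointness of $j$. Your additional remarks on where $Dj=0$ enters are a faithful expansion of the step the paper states without detail.
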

\begin{proof}
Let $N$ be a smooth manifold equipped with a K\"ahler structure $(j,g)$. Let $G$ and ${\mathbb G}$ be the K\"ahler metrics defined as above and define the following diffeomorphism:
\[
f:TN\rightarrow TN:(p,V)\mapsto (p,-jV).
\]
If $\bar X\in T_{(p,V)}TN$ then $\Pi df(\bar X)=\Pi\bar X$ and using the fact that $j$ is parallel, we have that $Kdf(\bar X)=-jK\bar X$. Thus,
\begin{eqnarray}
f^{\ast}G(\bar X,\bar Y)&=&g(\Pi\bar X,-jK\bar Y)+g(\Pi\bar Y,-jK\bar X)\nonumber \\
&=&-g(\Pi\bar X,jK\bar Y)+g(j\Pi\bar Y,K\bar X)\nonumber \\
&=& {\mathbb G}(\bar X,\bar Y),\nonumber
\end{eqnarray}
which shows that $f$ is an isometry.
\end{proof}

%%%%%%%%%%%%%%%%%%%%%%%%%%%%%%%%%%%%%%%%%%%%%%%%%%%
\section{Submanifold theory}
%%%%%%%%%%%%%%%%%%%%%%%%%%%%%%%%%%%%%%%%%%%%%%%%%%%

We now investigate the submanifold theory of $(TN,G)$. In particular, we will study geodesics and the Lagrangian graphs. 

%%%%%%%%%%%%%%%%%%%%%%%%%%%%%%%%%%%%%%%%%%%%%%%%%%%%5
\subsection{Geodesics}\label{s4.1}
We are now in position to characterize the geodesics of the neutral metric $G$. In fact, we prove our second result:

\begin{proof}[\bf Proof of Theorem \ref{t:geodesics}]
Let $\bar X(t):=\gamma'(t)$. Then, using (\ref{e:Levi-Civita}), we have $\Pi \nabla_{\gamma'}\gamma'=D_{\Pi\bar X}\Pi\bar X$, and $K\nabla_{\gamma'}\gamma'=D_{\Pi\bar X}K\bar X+R(V,\Pi\bar X)\Pi\bar X$. 
If $\gamma(t)=(x(t),V(t))$  is a geodesic then, $D_{\Pi\bar X}\Pi\bar X=0$, and thus $D_{x'}x'=0$, which implies that $x(t)$ is a geodesic. On the other hand, $K\bar X=D_{x'}V$ and therefore, $D_{\Pi\bar X}K\bar X=D^2_{x'}V$. 
Using the fact that $D_{\Pi\bar X}K\bar X+R(V,\Pi\bar X)\Pi\bar X=0$, we have $D^2_{x'}V+R(V,x')x'=0$, which shows that $V$ is a Jacobi field along the geodesic $x(t)$.

Conversely, when $V(t)$ is a Jacobi field along the geodesic $x(t)$ then, obviously, $\gamma(t)=(x(t),V(t))$  is a geodesic.
\end{proof}

\vspace{0.1in}

%%%%%%%%%%%%%%%%%%%%%%%%%%%%%%%%%%%%%%%%%%%%%%%%%%%
\subsection{Graph submanifolds}\label{s4.2}

Let $(N,g)$ be a $n$-dimensional Riemannian manifold and $U$ be an open subset of $N$. Considering a vector field $V$ in $N$, we obtain a $n$-dimensional submanifold ${\mathbb V}\subset TN$ which is a section of the canonical bundle $\pi: TN\rightarrow N$. Such submanifolds are immersed as graphs, that is, ${\mathbb V}=f(N)$, where $f(p)=(p,V(p))$. 

The following proposition gives a relation between the null points of the graph ${\mathbb V}$ with the critical points 
of the length function of an integral curve of $V$.

\begin{prop}
Let $V$ be a vector field of $N$ and ${\mathbb V}$ be the corresponding graph. Then the following two statements hold true:
\begin{enumerate}
\item If an integral curve of $V$ is a geodesic in $(N,g)$ then  ${\mathbb V}$ admits a null curve.
\item If $V$ admits a closed integral curve then ${\mathbb V}$ must contain a null point.
\end{enumerate}
\end{prop}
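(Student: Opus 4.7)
The plan is to lift an integral curve $x(t)$ of $V$ to the graph as $\gamma(t) = (x(t), V(x(t)))$ and to read off both statements from the $G$-norm of $\gamma'(t)$. First I would identify the horizontal and vertical components of the tangent vector $\bar X(t) = \gamma'(t)$. Since $\gamma$ projects to $x$ under $\pi$ and its vertical part is the covariant derivative along $x$ of the section $t \mapsto V(x(t))$, the connection map gives $\Pi\bar X = x'(t) = V$ and $K\bar X = D_{x'}V = D_V V$, each evaluated at $x(t)$. Feeding this into the formula (\ref{e:theneutralmetric}) for $G$ yields the key identity
\[
G(\bar X(t), \bar X(t)) \;=\; 2\,g(V, D_V V)\big|_{x(t)} \;=\; \frac{d}{dt}\,g(V,V)\big|_{x(t)}.
\]

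For part (1), if $x$ is a geodesic in $(N,g)$ then $D_V V = D_{x'}x' = 0$ along $x$, so the right-hand side of the above identity vanishes identically in $t$, which makes $\gamma$ a null curve lying in $\mathbb{V}$.

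For part (2), if $x:[0,T]\to N$ is closed then $t\mapsto g(V,V)|_{x(t)}$ is a smooth periodic function on $[0,T]$. Rolle's theorem furnishes some $t_0\in(0,T)$ at which its derivative vanishes, hence $G(\bar X(t_0),\bar X(t_0)) = 0$, so the point $\gamma(t_0)\in\mathbb{V}$ carries a null tangent vector and is a null point of $\mathbb{V}$ in the required sense.

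The main obstacle, such as it is, lies in the bookkeeping of the connection-map identification $K\bar X = D_{x'}V$ for the section $t\mapsto V(x(t))$; once the single identity $G(\gamma',\gamma') = \tfrac{d}{dt}g(V,V)\circ x$ is in place, both parts are one-line consequences of, respectively, the geodesic equation and Rolle's theorem.
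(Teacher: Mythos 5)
Your proof is correct and follows essentially the same route as the paper: both lift the integral curve to $\gamma(t)=(x(t),V(x(t)))$, compute $G(\gamma',\gamma')=2g(x',D_{x'}x')=\tfrac{d}{dt}|V|^2$, and then derive (1) from the geodesic equation and (2) from the vanishing of the derivative of a periodic function. The only cosmetic difference is that you invoke Rolle's theorem explicitly where the paper just asserts the existence of the critical point $t_0$.
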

\begin{proof}
Let $p=p(t)$ be an integral curve of $V$, that is, 
$V(t):=V(p(t))=p'(t)$.
The corresponding curve in $TN$ is given by
\[
f(t)=(p(t),V(t)).
\]
Then $f'(t)=(p'(t),D_{p'(t)}V(t))$ and thus,
\begin{eqnarray}
G(f'(t),f'(t))&=&2g(p'(t),D_{p'(t)}V(t))=2g(p'(t),D_{p'(t)}p'(t))\nonumber \\
&=&D_{p'(t)}\Big(g(p'(t),p'(t))\Big)=\frac{d}{dt}|p'|^2=\frac{d}{dt}|V|^2.\nonumber 
\end{eqnarray}

(1) When the curve $p=p(t)$ is a geodesic then $D_{p'(t)}p'(t)$ must vanish and thus $G(f'(t),f'(t))=0$ for any $t$. Therefore $f(t)$ is a null curve.

(2) Assuming that the integral curve $p(t)$ is closed, there exists $t_0\in{\mathbb S}^1$ such that,
\[
\frac{d}{dt}|p'(t_0)|^2=\frac{d}{dt}|V(t_0)|^2=0,
\]
which means that $f$ is null at the point $t_0$.
\end{proof}
\vspace{0.1in}

We now study Lagrangian graphs in $TN$. We need first to recall the definition of a Lagrangian submanifold:

\begin{defi}
Let $N$ be a $2n$-dimensional manifold equipped with a symplectic structure $\omega$. An immersion $f:\Sigma^n\rightarrow N$ is said to be {\it Lagrangian} if $f^{\ast}\omega = 0$.
\end{defi}
\vspace{0.1in}

For Lagrangian graphs we have the following:
\vspace{0.1in}

\begin{prop}\label{p:lagrangraphgradi}
Let $V$ be a vector field in the open subset $U\subset N$. The graph ${\mathbb V}$ is Lagrangian if and only if $V$ is locally the gradient of a real smooth function $u$ on $U\subset N$, i.e, $V=D u$.
\end{prop}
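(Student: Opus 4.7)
The plan is to unpack the Lagrangian condition $f^{\ast}\Omega = 0$ into a differential condition on $V$ and then apply the Poincar\'e lemma.

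First, I would identify the differential $df$ in terms of the splitting $TTN \simeq HN \oplus VN$. Given $X \in T_pN$, pick a curve $p(t)$ with $p(0)=p$ and $p'(0)=X$; then $f\circ p$ is the curve $t\mapsto (p(t),V(p(t)))$ in $TN$. By the very definition of the connection map $K$ recalled in Section \ref{s:intr}, this yields
\[
\Pi\,df(X) = X, \qquad K\,df(X) = D_X V.
\]

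Next, I would substitute into the formula $\Omega(\bar X,\bar Y)=g(K\bar X,\Pi\bar Y)-g(\Pi\bar X,K\bar Y)$ to obtain
\[
(f^{\ast}\Omega)(X,Y) = g(D_X V, Y) - g(X, D_Y V).
\]
Thus the graph $\mathbb{V}$ is Lagrangian if and only if the bilinear form $(X,Y)\mapsto g(D_X V,Y)-g(X,D_Y V)$ vanishes identically on $U$.

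The crucial (but routine) identification is that this form is precisely the exterior derivative of the metric dual one-form $\alpha := g(V,\cdot)\in\Omega^1(U)$. Indeed, expanding $d\alpha(X,Y)=X(\alpha(Y))-Y(\alpha(X))-\alpha([X,Y])$ and using the metric compatibility and torsion-freeness of $D$, the terms involving $g(V,D_XY)$ and $g(V,D_YX)$ cancel against $\alpha([X,Y])$, leaving exactly $g(D_XV,Y)-g(X,D_YV)$. Hence $\mathbb{V}$ is Lagrangian if and only if $d\alpha=0$.

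Finally I would invoke the Poincar\'e lemma: on a sufficiently small neighbourhood of any point of $U$, the closed one-form $\alpha$ is exact, so $\alpha=du$ for some smooth real function $u$. Applying the musical isomorphism $\mathfrak g^{-1}$ to both sides converts $\alpha = du$ into $V=Du$, completing both directions (the converse is immediate: if $V=Du$ then $\alpha=du$ is closed, so $f^{\ast}\Omega=0$). No step here looks like a genuine obstacle; the only thing to be careful about is the sign convention in $\Omega$ and the cancellation in the computation of $d\alpha$, where the symmetry $D_XY-D_YX=[X,Y]$ must be used correctly.
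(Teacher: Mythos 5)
Your argument is correct: the identities $\Pi\,df(X)=X$, $K\,df(X)=D_XV$ give $(f^{\ast}\Omega)(X,Y)=g(D_XV,Y)-g(D_YV,X)$, which (by metric compatibility and torsion-freeness of $D$) is exactly $d\alpha(X,Y)$ for $\alpha=g(V,\cdot)$, and the Poincar\'e lemma finishes the job. The paper in fact states this proposition without any proof, so there is nothing to compare against; your computation is the natural one and fills that gap (equivalently, one could note that the section $\mathfrak g\circ f$ of $T^{\ast}N$ tautologically pulls the Liouville form $\xi$ back to $\alpha$, so $f^{\ast}\Omega=-d\alpha$ directly).
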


We now prove our third result.

\begin{proof}[{\bf Proof of Theorem \ref{t:ricciflat}}]
Let $g$ be the Riemannian metric in $N$ and ${\mathbb V}$ be the submanifold of $TN$ obtained by the image of the graph:
\[
f:U\subset N\rightarrow TN:p\mapsto (p,V(p)),
\]
where $V$ is a vector field defined on the open subset of $N$. 
The fact that $f$ is Lagrangian implies that the almost paracomplex structure $J$ is a bundle isomorphism between the tangent bundle $T{\mathbb V}$ and the normal bundle $N{\mathbb V}$. We then consider the Maslov form $\eta$ on ${\mathbb V}$ defined by, 
\[
\eta=G(J{\mathbb H},.),
\]
where ${\mathbb H}$ is the mean curvature vector of $f$. The Lagrangian condition implies the following relation:
\[
d\eta=\frac{1}{2}\overline{Ric}(J.,.)|_{{\mathbb V} },
\]
where $\overline{Ric}$ denotes the Ricci tensor of $G$. Assuming that ${\mathbb H}$ is parallel, the Maslov form is closed and therefore,
\[
\overline{Ric}(J\bar X,\bar Y)=0,
\]
for every tangential vector fields $\bar X,\bar Y$. If $Ric$ denotes the Ricci tensor of $g$, the Proposition \ref{p:ricci}, gives
\[
\overline{Ric}(\bar X,\bar Y)=2Ric(\Pi\bar X,\Pi\bar Y),
\]
If $X,Y$ are vector fields in $U$, the fact that $f$ is a graph, implies that $\Pi df(X)=X$ and $\Pi df(Y)=Y$. On the other hand, using the definition $J$, we have
\[
J(df(X))=(\Pi df(X),-K df(X)).
\]
Thus,
\begin{eqnarray}
0&=&\overline{Ric}(Jdf(X),df(Y))\nonumber\\
&=&\overline{Ric}((\Pi df(X),-Kdf(X)),(\Pi df(Y),Kdf(Y))\nonumber\\
&=&2Ric(\Pi df(X),\Pi df(Y)\nonumber\\
&=&2Ric(X,Y),\nonumber
\end{eqnarray}
and the Theorem follows.
\end{proof}

\vspace{0.1in}

\begin{coro}
Let $(N,g)$ be a non-flat Riemannian 2-manifold and $\Sigma$ be a Lagrangian surface of $(TN, G,\Omega)$. Then $\Sigma$ has parallel mean curvature if and only if it is a set of lines that are orthogonal to a  geodesic $\gamma$ of $N$.
\end{coro}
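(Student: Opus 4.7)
The plan splits into the $(\Leftarrow)$ direction, which is a direct computation, and the $(\Rightarrow)$ direction, which re-runs the Maslov-form argument of Theorem \ref{t:ricciflat} and then classifies the resulting ruled surfaces.

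For $(\Leftarrow)$, I take a geodesic $\gamma(s)$ in $N$ with unit tangent $T=\gamma'$ and unit normal $E$; since $\dim N=2$ and $\gamma$ is a geodesic, $E$ is automatically parallel along $\gamma$. Set $\Sigma:=\{(\gamma(s),tE(s)):s,t\}$, whose natural tangent frame is $\partial_s\simeq(T,0)=T^h$ and $\partial_t\simeq(0,E)=E^v$. Then $\Omega(T^h,E^v)=-g(T,E)=0$, so $\Sigma$ is Lagrangian. Using formulas (\ref{e:equation00}), (\ref{e:equation000}) together with the 2-dimensional identity $R(E,T)T=KE$ (with $K$ the Gauss curvature of $g$), I would compute $\nabla_{T^h}T^h\simeq(0,tKE)=tK\,\partial_t$, $\nabla_{T^h}E^v=(D_TE)^v=0$, and $\nabla_{E^v}E^v=0$, all tangent to $\Sigma$. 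Hence $\Sigma$ is totally geodesic in $(TN,G)$, so its mean curvature vanishes identically and is trivially parallel.

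For $(\Rightarrow)$, suppose $\Sigma$ has parallel mean curvature. The Maslov-form computation in the proof of Theorem \ref{t:ricciflat} never used that the Lagrangian was a graph, only that it was Lagrangian and that $J$ is a bundle isomorphism between its tangent and normal bundles. Running it on $\Sigma$ yields $\overline{Ric}(J\bar X,\bar Y)=0$ for all tangent $\bar X,\bar Y$, which combined with Proposition \ref{p:ricci} and $\Pi J\bar X=\Pi\bar X$ reduces to $Ric(\Pi\bar X,\Pi\bar Y)=0$ on $T\Sigma$. In two dimensions $Ric=Kg$, and the non-flat hypothesis forces $K$ to be non-zero on a dense open set, so $g$ restricted to $\Pi(T\Sigma)$ vanishes there. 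Consequently $\Sigma$ cannot project surjectively onto an open subset of $N$ (a graph would force $g$ Ricci-flat by Theorem \ref{t:ricciflat}, contradicting non-flatness), and its base projection reduces to a curve $c$ in $N$. I then parametrize $\Sigma=\{(c(s),V(s,t))\}$, decompose $V=a(s)T(s)+b(s,t)E(s)$ with $T=c'$ and $E\perp T$ (the Lagrangian condition forcing $a$ to depend on $s$ only), and use (\ref{e:Levi-Civita}) to compute the second fundamental form. The geodesic curvature $\kappa$ of $c$ appears as the coefficient of $E$ in the horizontal part of $\nabla_{\partial_s}\partial_s$; demanding this covariant derivative lie in $T\Sigma$ (whose $\Pi$-image is spanned by $T$) forces $\kappa=0$, so $c=\gamma$ is a geodesic, $E$ is parallel along it, and each slice $s=\mathrm{const}$ is an affine line in $T_{\gamma(s)}N$ orthogonal to $\gamma'(s)$.

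The main obstacle I expect is this last classification step: because $\partial_t$ is always vertical and $G$ vanishes on vertical-vertical pairs, the induced metric on any Lagrangian surface ruled over a curve is necessarily degenerate, so the standard tangential/normal splitting based on the induced metric is unavailable, and ``parallel mean curvature'' has to be read through the $J$-complement, or equivalently as total geodesicity. Once that book-keeping is in place, the required constraint $\kappa=0$ falls out cleanly from the explicit formulas (\ref{e:equation00}), (\ref{e:equation000}) and (\ref{e:Levi-Civita}) applied to the above parametrization.
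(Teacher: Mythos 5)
Your backward direction is fine and is essentially a totally-geodesic strengthening of what the paper computes for the normal congruence over a geodesic (via (\ref{e:equation00}), (\ref{e:equation000})). The problems are all in the forward direction. First, your Maslov-form step over-concludes: from $\mbox{Ric}(\Pi\bar X,\Pi\bar Y)=0$ on $T\Sigma$, $\mbox{Ric}=Kg$ with $K\neq 0$, and $g$ positive definite, you get $\Pi(T_q\Sigma)=\{0\}$, i.e.\ $\Sigma$ would be an open piece of a single fibre --- not a surface ruled over a curve --- so your own intermediate conclusion contradicts the parametrization you then assert. The actual resolution is that the Maslov argument presupposes a nondegenerate induced metric (otherwise ${\mathbb H}$ is undefined and $J$ does not identify $T\Sigma$ with a metric normal bundle); what it legitimately yields is only that $f^{\ast}G$ must degenerate wherever $K\neq 0$, i.e.\ that $\Sigma$ is nowhere a nondegenerate graph. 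From there the ruled parametrization $(s,t)\mapsto(\gamma(s),a(s)\gamma'(s)+tj\gamma'(s))$ still has to be derived; the paper imports it from an external proposition rather than proving it, and you assert it rather than prove it.

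Second, you correctly identify the degeneracy of the induced metric as the main obstacle (indeed $G(f_t,f_t)=G(f_s,f_t)=0$ identically on any such ruled surface), but neither of your proposed workarounds closes the gap. Reading ``parallel mean curvature'' as total geodesy changes the statement: over a geodesic the surfaces with $a''\neq 0$ satisfy $\nabla_{f_s}f_s\simeq(0,a''\gamma'+tKj\gamma')$, which is not tangent, so they are minimal (hence have parallel mean curvature) without being totally geodesic. Reading it through the ``$J$-complement'' also fails here, because $J=J_1$ preserves the vertical distribution, so $Jf_t=-f_t$ and $J(T\Sigma)$ meets $T\Sigma$ along the ruling direction rather than complementing it. The paper's proof of the forward direction consists precisely of the computation you leave unperformed: ${\mathbb H}=(0,k(s)j\gamma')$, $\nabla_{\partial_t}{\mathbb H}=0$ and $\nabla_{\partial_s}{\mathbb H}=(0,-k^2\gamma'+k_sj\gamma')$, whence parallelism is equivalent to $k\equiv 0$. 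Until you fix a definition of ${\mathbb H}$ on a $G$-degenerate Lagrangian and carry out that calculation, the claim that ``$\kappa=0$ falls out cleanly'' is an assertion, not a proof.
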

\begin{proof}
Suppose that $g$ is non-flat. Using Theorem \ref{t:ricciflat}, the Lagrangian surface $\Sigma$ can't be the graph of a smooth function on $N$. Following a similar argument as the proof of Proposition 2.1 of \cite{An}, $\Sigma$ can be parametrized by:
\[
f:U\subset {\mathbb R}^2\rightarrow TN:(s,t)\mapsto (\gamma(s),a(s)\gamma'(s)+tj\gamma'(s)),
\]
where $j$ denotes the canonical complex structure on $N$ defined as a rotation on $TN$ about $\pi/2$ and $\gamma=\gamma(s)$ is a curve in $N$. The mean curvature ${\mathbb H}$ of $f$ is
\[
{\mathbb H}=(0,k(s)j\gamma'(s)),
\]
where $k$ denotes the curvature of $\gamma$. Obviously, we have that $\nabla_{\partial_t} {\mathbb H}=0$ and 
\[
\nabla_{\partial_s} {\mathbb H}=(0,-k^2\gamma'+k_sj\gamma'),
\]
which shows that $\Sigma$ has parallel mean curvature is equivalent to the fact that $\gamma$ is a geodesic. 
\end{proof}
\vspace{0.1in}

%%%%%%%%%%%%%%%%%%%%%%%%%%%%%%%%%%%%%%%%%%%%%%%%%%%%%%%%%%%%%%%%%5
\subsection{Lagrangian graphs in the Euclidean space.}\label{s4.3}

In this subsection we study Lagrangian graphs in $T{\mathbb R}^n$.

\begin{defi}
A smooth function $u$ on ${\mathbb R}^n$ is said to be {\bf functionally related of second order} if for every two pairs $(i_1,i_2)$ and $(j_1,j_2)$ there exists a function $F$ on ${\mathbb R}^2$ such that $F(u_{x_{i_1}x_{i_2}},u_{x_{j_1}x_{j_2}})=0$. 
\end{defi}

\vspace{0.1in}

{\bf Example:} 
The following functions in ${\mathbb R}^n$ are functionally related of second order: $$u(x_1,\ldots,x_n):=f(a_1x_1+\ldots+a_nx_n)$$and $$v(x_1,\ldots,x_n)=\displaystyle\sum_{1\leq i\leq j\leq n}(a_{ij}x_ix_j+b_ix_i+c).$$ 

\vspace{0.1in}

Consider the $n$-dimensional Euclidean space $({\mathbb R}^n,ds^2)$, where $ds^2$ is the usual inner product and let $G$ be the neutral metric in $T{\mathbb R}^n$ derived by $ds^2$.

%\vspace{0.1in}

%\begin{proof}
%If we denote the adjoint matrix by $adj(h)$, then
%\[Adj(h)=\det(h)h^{-1}\]
%If $A=(a_{ij})$ and $B=(b_{ij})$, then 
%\[tr(AB)=\sum_{i,j}a_{ij}b_{ij}.\]
%Considering the Jacobi's formula:
%\[\partial_{x_l}(\det h)=tr(Adj(h)\,\partial_{x_l}h),\]
%we then have
%\begin{eqnarray}\partial_{x_l}(\det h)&=&tr(Adj(h)\,\partial_{x_l}h)\nonumber \\&=&\det(h)\,tr(h^{-1}\partial_{x_l}h)\nonumber \\&=&\det(h)\,\sum_{i,j}h^{ij}\partial_{x_l}h_{ij},\nonumber \end{eqnarray}
%and the Lemma follows.
%\end{proof}

For Lagrangian graphs in $T{\mathbb R}^{n}$ we prove the Theorem \ref{t:oneofthemainthe}.

\vspace{0.1in}

\begin{proof}[{\bf Proof of Theorem \ref{t:oneofthemainthe}}]
Let $(x_1,\ldots,x_{n})$ be the standard Cartesian coordinates of ${\mathbb R}^{n}$ and let 
\[
f(x_1,\ldots,x_{n})=(x_1,\ldots,x_{n},u_{x_1},\ldots,u_{x_{n}}),
\]
be the local expression of the Lagrangian submanifold. 

1. We then have,
\[
f_i:=f_{x_i}=(\partial/\partial x_i,D_{\partial/\partial x_i}D u)=(\partial/\partial x_i,u_{x_ix_1},\ldots,u_{x_ix_{n}})
\]
The first fundamental form has coefficients:
\[
g_{ij}=G(f_i,f_j)=2\left<\partial/\partial x_i,D_{\partial/\partial x_j}D u\right>=2u_{x_ix_j},
\]
which yields,
\[
g=f^{\ast}G=2\,\mbox{Hess}(u).
\]
For tangential vector fields $X,Y,Z,W$ the Gauss equation for $f$ is 
\begin{equation}\label{e:gaussequ}
G(\bar R(X,Y)Z,W)=G(\bar B(X,W),\bar B(Y,Z))-G(\bar B(X,Z),\bar B(Y,W)),
\end{equation}
where $\bar B$ denotes the second fundamental form of $f$. The Levi-Civita connection $\nabla$ of $g$ is:
\[
\nabla_{f_i}f_j=\sum_{l,k}g^{lk}u_{ijl}f_k.
\]
Thus, 
\begin{equation}\label{e:secondfund}
\bar B(f_i,f_j)=f_{ij}-\sum_{l,k}g^{lk}u_{ijl}f_k.
\end{equation}
The equation (\ref{e:gaussequ}) gives
\[
G(\bar R(f_i,f_j)f_k,f_l)=\sum_{s,t}g^{st}(\partial_{x_k}u_{x_ix_s}\partial_{x_l}u_{x_jx_t}-\partial_{x_l}u_{x_ix_s}\partial_{x_k}u_{x_jx_t})
\]

Assuming now that $u$ is functionally related of second order implies
\[
\partial_{x_k}u_{x_ix_s}\partial_{x_l}u_{x_jx_t}-\partial_{x_l}u_{x_ix_s}\partial_{x_k}u_{x_jx_t}=0,
\]
for any indices $i,j,k,l,s,t$, and this completes the first statement of the Theorem.

\vspace{0.1in}

2. Let $GL(n,{\mathbb R})$ be the space of all invertible $n\times n$ real matrices and let $g:{\mathbb R}^n\rightarrow GL(n,{\mathbb R}):(x_1,\ldots,x_n)\mapsto (g_{ij}(x_1,\ldots,x_n))$ be a smooth mapping. Using Jacobi's formula,
\[
\frac{\partial}{\partial x_l}(\det g)={\mbox Tr}\left(\mbox{Adj}(g)\,\frac{\partial}{\partial x_l}g\right),
\]
where $\mbox{Adj}$ denotes the adjoint matrix of $h$, then one can prove the following:

\begin{lemm}\label{l:derivofmatrix}
Let $g:{\mathbb R}^n\rightarrow GL(n,{\mathbb R})$ be a smooth mapping. If $g_{ij}$ and $g^{ij}$ are the entries of $g$ and $g^{-1}$, respectively, then
\[
\sum_{i,j}g^{ij}\partial_{x_l}g_{ij}=\partial_{x_l}\log|\det g|.
\]
\end{lemm}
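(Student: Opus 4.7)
The plan is to derive the claim as a direct consequence of Jacobi's formula, which is already recorded just above the lemma statement. First I would invoke the cofactor identity $g\cdot\mathrm{Adj}(g) = (\det g)\,I$; since $g$ takes values in $GL(n,\mathbb{R})$, this yields $\mathrm{Adj}(g) = (\det g)\,g^{-1}$. Substituting into Jacobi's formula produces
\[
\partial_{x_l}(\det g) \;=\; \mathrm{Tr}\bigl((\det g)\,g^{-1}\,\partial_{x_l}g\bigr) \;=\; (\det g)\,\mathrm{Tr}\bigl(g^{-1}\,\partial_{x_l}g\bigr).
\]

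Next, I would divide through by $\det g$ and recognise the left-hand side, by the chain rule, as $\partial_{x_l}\log|\det g|$. The trace on the right expands as $\mathrm{Tr}\bigl(g^{-1}\,\partial_{x_l}g\bigr) = \sum_{i,j} g^{ij}\,\partial_{x_l}g_{ji}$, which in the symmetric setting relevant to the intended application of the lemma (namely the induced metric $f^{\ast}G = 2\,\mathrm{Hess}(u)$) coincides with $\sum_{i,j} g^{ij}\,\partial_{x_l}g_{ij}$, yielding the stated identity. I do not foresee any real obstacle here: the argument reduces to one substitution, one division, and the identification of a matrix-product trace with the indexed sum appearing in the conclusion. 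The only minor point warranting a remark is the symmetry assumption implicit in swapping $g_{ji}$ for $g_{ij}$ under the sum, which is automatic in every context where the lemma will be invoked.
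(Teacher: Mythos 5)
Your proof is correct and follows exactly the route the paper intends: the paper offers no written argument beyond citing Jacobi's formula, and your substitution of $\mathrm{Adj}(g)=(\det g)\,g^{-1}$, followed by division by $\det g$ and the chain rule for $\log|\det g|$, is the standard completion of that hint. Your closing remark is also well taken and worth keeping: as literally stated the left-hand side is $\mathrm{Tr}\bigl((g^{-1})^{T}\,\partial_{x_l}g\bigr)$ rather than $\mathrm{Tr}\bigl(g^{-1}\,\partial_{x_l}g\bigr)$, so the identity genuinely requires $g$ symmetric (it fails, e.g., for $g=\left(\begin{smallmatrix}1 & x\\ 0 & 1\end{smallmatrix}\right)$), but symmetry holds in the only place the lemma is invoked, namely $g=2\,\mathrm{Hess}(u)$.
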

Using the expression (\ref{e:secondfund}), the second fundamental form becomes
\begin{eqnarray}
\bar B(f_i,f_j)&=&(0,(u_{1ij},\ldots,u_{nij}))-\sum_{l,k}g^{lk}u_{ijl}(\partial/\partial x_k,(u_{1k},\ldots,u_{nk}))\nonumber \\
%&=&(0,(u_{1ij},\ldots,u_{nij}))-(\sum_{l,k}h^{lk}u_{ijl}\partial/\partial x_k,\sum_{l,k,s}h^{lk}u_{ijl}u_{sk}\partial/\partial x_s)\nonumber\\
%&=&(0,\sum_{l}u_{lij}\partial/\partial x_l)-(\sum_{l,k}h^{lk}u_{ijl}\partial/\partial x_k,\frac{1}{2}\sum_{l,s}(\sum_{k}h^{lk}h_{sk})u_{ijl}\partial/\partial x_s)\nonumber \\
%&=&(0,\sum_{l}u_{lij}\partial/\partial x_l)-(\sum_{l,k}h^{lk}u_{ijl}\partial/\partial x_k,\frac{1}{2}\sum_{l,s}\delta_l^su_{ijl}\partial/\partial x_s)\nonumber\\
%&=&(0,\sum_{l}u_{lij}\partial/\partial x_l)-(\sum_{l,k}h^{lk}u_{ijl}\partial/\partial x_k,\frac{1}{2}\sum_{l}u_{ijl}\partial/\partial x_l)\nonumber\\
&=&\Big(-\sum_{l,k}g^{lk}u_{ijl}\partial/\partial x_k,\frac{1}{2}\sum_{l}u_{ijl}\partial/\partial x_l\Big)\label{e:secfundamentalform}
\end{eqnarray}
The mean curvature vector ${\mathbb H}$ is
\[
{\mathbb H}=\Big(-\sum_{i,j,l,k}g^{ij}g^{lk}u_{ijl}\partial/\partial x_k,\frac{1}{2}\sum_{i,j,l}g^{ij}u_{ijl}\partial/\partial x_l\Big)
\]
A straightforward computation gives
\[
{\mathbb H}=-\sum_{i,j,l,k}g^{ij}g^{lk}u_{ijl}Jf_{k}
\]
We then have from Lemma \ref{l:derivofmatrix},
\begin{equation}\label{e:meancurvintermsj}
J{\mathbb H}= -\sum_{l,k}g^{lk}\partial_{x_l}\log|\det Hess(u)| f_k.
\end{equation}
Using (\ref{e:meancurvintermsj}), we get
\[
G(J{\mathbb H},f_s)=\partial_{x_s}\log|\det Hess(u)|^{-2},
\]
and thus,
\[
J{\mathbb H}=\nabla\log|\det Hess(u)|^{-2}.
\]
Denote the divergence with respect to the induced metric $g$ by $\mbox{div}$. Then,
\[
\mbox{div}J{\mathbb H}=\Delta\log|\det Hess(u)|^{-2}.
\]
where $\Delta$ denotes the Laplacian with respect to $g$. Therefore, $f$ is Hamiltonian minimal if and only if $\Delta\log|\det Hess(u)|=0$.

\vspace{0.1in}

3. The minimal condition follows easily from (\ref{e:meancurvintermsj}). 

Assuming that $f$ is totally geodesic, the relation (\ref{e:secfundamentalform}) gives
\[
u_{ijl}=0,
\]for every indices $i,j,k$ and thus $u$ must satisfy (\ref{e:totallycondition}).

Conversely, assume that $u$ is defined by (\ref{e:totallycondition}). Then all third derivatives vanish and obviously the second fundamental form $B$ is identically zero. The fact that totally geodesic Lagrangian graphs are flat comes from part (1) of the Theorem.
\end{proof}

As an application of Theorem \ref{t:oneofthemainthe}, we give the following corollary:
% \begin{coro}
 %Let $u:{\mathbb R}^n\rightarrow {\mathbb R}$ be a compactly supported smooth function with positive (or negative) definite Hessian. If $f:{\mathbb R}^n\rightarrow T{\mathbb R}^n:p\mapsto (p,\nabla u(p))$ is Hamiltonian minimal then $f$ is minimal. 
 %\end{coro}
 
  \begin{coro}
Let $\Omega$ be an embedded ball in ${\mathbb R}^n$ such that $\partial\Omega$ is an embedded sphere. Suppose that $f$ is a Hamiltonian minimal graph in $\Omega$ such that the induced metric is Riemannian. If $f$ is minimal at $\partial\Omega$ then it is minimal in $\Omega$.  
 \end{coro}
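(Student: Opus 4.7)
The plan is to translate both hypotheses into conditions on a single scalar function $\phi:=\log|\det\mathrm{Hess}(u)|$ and then reduce the corollary to a uniqueness statement for the homogeneous Neumann problem on the induced Riemannian manifold $(\Omega,g)$, where $g=2\,\mathrm{Hess}(u)$ is positive definite by the Riemannian hypothesis.

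First I would invoke part (2) of Theorem \ref{t:oneofthemainthe}: the Hamiltonian minimality of $f$ throughout $\Omega$ is precisely the statement that $\Delta_g\phi=0$ on $\Omega$, so $\phi$ is a harmonic function on the compact Riemannian manifold with boundary $(\overline{\Omega},g)$. Next, I would reread the mean curvature identity (\ref{e:meancurvintermsj}),
\[
J{\mathbb H}=-\sum_{l,k}g^{lk}\,\partial_{x_l}\phi\, f_k,
\]
which shows that at any point $p\in\overline{\Omega}$ the vanishing of ${\mathbb H}(p)$ is equivalent to the vanishing of all partial derivatives of $\phi$ at $p$. Consequently, the assumption that $f$ is minimal at $\partial\Omega$ gives $\nabla\phi\equiv 0$ on $\partial\Omega$, and in particular the Riemannian normal derivative $\partial_\nu\phi$ vanishes on $\partial\Omega$, where $\nu$ is the outward unit normal to $\partial\Omega$ in $(\Omega,g)$.

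Because $\Omega$ is an embedded ball with smoothly embedded spherical boundary, $(\overline{\Omega},g)$ is a connected compact Riemannian manifold with smooth boundary, so Green's identity applies:
\[
\int_\Omega |\nabla\phi|_g^{\,2}\,dV_g=\int_{\partial\Omega}\phi\,\partial_\nu\phi\,dS_g-\int_\Omega \phi\,\Delta_g\phi\,dV_g=0.
\]
Therefore $|\nabla\phi|_g\equiv 0$ on $\Omega$, so $\phi$ is constant; equivalently $\det\mathrm{Hess}(u)$ is a positive real constant, which by the Monge--Amp\`ere characterization in part (3) of Theorem \ref{t:oneofthemainthe} means that $f$ is minimal on all of $\Omega$.

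No real obstacle is anticipated: the only subtle point is to make sure that the Riemannian hypothesis on the induced metric is genuinely used, namely to guarantee that $g=2\,\mathrm{Hess}(u)$ is positive definite on $\overline{\Omega}$ so that the intrinsic gradient, Laplacian, and divergence theorem on the compact manifold with boundary $(\overline{\Omega},g)$ are the usual elliptic objects, and to verify that the derivatives of $\phi$ in the Euclidean directions $\partial/\partial x_l$ and in the intrinsic sense on $(\Omega,g)$ vanish simultaneously (which follows from the invertibility of $(g^{lk})$ in the formula for $J{\mathbb H}$). Once these points are noted, the argument is the standard Neumann uniqueness statement: a harmonic function on a connected domain with vanishing Neumann data is constant.
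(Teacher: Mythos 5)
Your proof is correct and follows essentially the same route as the paper: an energy/divergence-theorem argument on $(\Omega,g)$ that combines the harmonicity of $\log\det\mathrm{Hess}(u)$ (from Hamiltonian minimality) with the vanishing of the boundary term (from minimality on $\partial\Omega$) to force $\int_\Omega g(\nabla\phi,\nabla\phi)=0$. The only difference is cosmetic --- you integrate by parts against $\phi=\log h$ directly (standard Neumann uniqueness for a harmonic function), whereas the paper integrates $\mathrm{div}(h\nabla h)$ and then invokes the pointwise identity $g(\nabla h,\nabla h)=h\Delta h$; your justification of the vanishing normal derivative on $\partial\Omega$ via the full gradient vanishing in (\ref{e:meancurvintermsj}) is, if anything, slightly more explicit than the paper's.
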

\begin{proof}
Let $g$ be the induced metric of $G$ in $\Omega$ through $f$ and let $\eta$ be the outward unit vector field normal to $\partial\Omega$ with respect to $g$. Since $f$ is a Lagrangian graph, there exists a smooth function $u:\Omega\rightarrow {\mathbb R}$ such that $f(p)=(p,Du(p))$, $p\in\Omega$. Therefore, $g=2\mbox{Hess}(u)$ and let $h=\det\mbox{Hess}(u)$. The immersion $f$ is minimal at $\partial\Omega$ and thus the Theorem \ref{t:oneofthemainthe} tells us that $h$ is constant in $\partial\Omega$. Using Stokes Theorem and the fact that $\partial\Omega$ is a sphere we have,
\[
\int_{\Omega}\mbox{div}(h\nabla h)=\int_{\partial\Omega}g(h\nabla h,\eta)=h\int_{\partial\Omega}g(\nabla h,\eta)=h\int_{\partial\Omega}\frac{\partial h}{\partial\eta}=0,
\]
where $\mbox{div}$ and $\nabla$ denote the divergence and the gradient of $g$. This, implies 
\begin{equation}\label{e:eqcoro1}
\int_{\Omega}g(\nabla h,\nabla h)=-\int_{\Omega}\Delta h,
\end{equation}
where $\Delta$ is the Laplacian of $g$.

On the other hand, $f$ is Hamiltonian minimal. Then, Theorem \ref{t:oneofthemainthe} says that $\log h$ must be harmonic. In other words,
\[
\Delta\log h=0.
\]
That means,
\[
g(\nabla h,\nabla h)=h\Delta h,
\]
and by integrating over $\Omega$, we then have
\begin{equation}\label{e:eqcoro2}
\int_{\Omega}g(\nabla h,\nabla h)=\int_{\Omega}\Delta h.
\end{equation}
Using (\ref{e:eqcoro1}) and (\ref{e:eqcoro2}) we have
\[
\int_{\Omega}g(\nabla h,\nabla h)=0,
\]
and since $g$ is Riemannian it follows that
\[
\nabla h(p)=0,
\] 
for every $p$ in $\Omega$ and the Corollary is completed.
\end{proof}

\vspace{0.1in}
%%%%%%%%%%%%%%%%%%%%%%%%%%%%%%%%%%%%%%%%%%%%%%%%%%%%%%%%%%%%%%%
\subsection{Source fields in ${\mathbb R}^n$}\label{s4.4}

As an application of Theorem \ref{t:oneofthemainthe}, we explore the geometry of source vector fields in ${\mathbb R}^n$.

\begin{defi}\label{d:source}
A vector field $V$ in ${\mathbb R}^n-\{0\}$ is said to be an {\it $SO(n)$ invariant source field} (or simply a {\it source field}) if it is given by
\[
V=H(R)\frac{\partial}{\partial R}
\]
where $R$ is the distance to the origin (the source). The function $H$ is called the field intensity.
\end{defi}

We now study submanifolds in $T{\mathbb R}^n$ that are graphs of source fields in ${\mathbb R}^n$. If $V$ is the source field, the corresponding graph in ${\mathbb R}^n$ will be denoted by ${\mathbb V}$. 

\begin{prop}
Let $f:{\mathbb R}^n-\{0\}\rightarrow T({\mathbb R}^n-\{0\}):p\mapsto (p,V(p))$ be the graph of a source field $V$ in $\R^n$ with field intensity $H$. Then $f$ is Lagrangian and in particular $V=\nabla u$, where $u=u(R)$ is the anti-derivative of $H$.
\end{prop}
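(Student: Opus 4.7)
The plan is to invoke Proposition \ref{p:lagrangraphgradi}, which characterises Lagrangian graphs in $TN$ as graphs of gradient vector fields. Thus the whole proposition reduces to exhibiting a potential function $u$ with $\nabla u = V$, and the candidate is essentially forced by the $SO(n)$-symmetry of $V$.

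First I would introduce the standard spherical decomposition on $\mathbb{R}^n-\{0\}$, where $R=|p|$ and $\partial/\partial R$ is the outward unit radial field. The key elementary fact is that the Euclidean gradient of any radial function $u=u(R)$ is purely radial:
\begin{equation*}
\nabla u(R) \;=\; u'(R)\,\nabla R \;=\; u'(R)\,\frac{\partial}{\partial R},
\end{equation*}
since $\nabla R = \partial/\partial R$ in the Euclidean metric on $\mathbb{R}^n-\{0\}$.

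Next I would define $u(R)$ as any antiderivative of $H$, that is $u(R)=\int_{R_0}^{R} H(s)\,ds$ for some fixed $R_0>0$; this is smooth on $\mathbb{R}^n-\{0\}$ because it depends only on $R$ and $H$ is smooth. Substituting into the formula above gives $\nabla u = H(R)\,\partial/\partial R = V$, so $V$ is a gradient field on $\mathbb{R}^n-\{0\}$.

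Finally, applying Proposition \ref{p:lagrangraphgradi} to the open set $U = \mathbb{R}^n-\{0\}$ immediately yields that the graph $f(p)=(p,V(p))=(p,\nabla u(p))$ is Lagrangian in $T(\mathbb{R}^n-\{0\})$, completing the proof. There is no real obstacle here: the only subtle point worth checking is that the antiderivative is globally defined on the punctured space (not just locally), which holds because $\mathbb{R}^n-\{0\}$ retracts onto any radial ray and the radial dependence of $u$ makes it single-valued on all of $\mathbb{R}^n-\{0\}$, including for $n=1$ where one should restrict to a connected component.
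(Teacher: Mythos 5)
Your proposal is correct and follows essentially the same route as the paper: take $u$ to be an antiderivative of $H$ as a function of $R$, verify $\nabla u = H(R)\,\partial/\partial R = V$, and conclude via Proposition \ref{p:lagrangraphgradi}. The only cosmetic difference is that you compute the gradient invariantly via $\nabla u = u'(R)\nabla R$ while the paper writes out $\partial u/\partial x_i = H(R)x_i/R$ in Cartesian coordinates; the content is identical.
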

\begin{proof}
Observe that if $u'(R)=H(R)$, then 
\[
\frac{\partial u}{\partial x_i}=\frac{du}{dR}\frac{\partial R}{\partial x_i}=\frac{H(R)}{R}x_i.
\]
Thus,
\begin{eqnarray}
V&=&H(R)\frac{\partial}{\partial R}\nonumber \\
&=&\frac{H(R)}{R}\sum_{i}x_i\frac{\partial}{\partial x_i}\nonumber \\
&=&\sum_{i}u_{x_i}\frac{\partial}{\partial x_i},\nonumber 
\end{eqnarray}
and the proposition follows.
\end{proof}

\vspace{0.1in}

We now prove our next result:

\begin{proof}[{\bf Proof of Theorem \ref{t:sourcefields}}]
Let $H$ be the field intensity of a source field $V$. Then
\[
V=H(R)\frac{\partial}{\partial R}
\]
where $R=\sqrt{x_1^2+\ldots+x_n^2}$. If $h=H/R$, then $V$ can be also written as
\[
V(x_1,\ldots,x_n)=h(R)\,(x_1,\ldots,x_n).
\]
It is not hard to see that there exists a smooth function $u$ such that $V=\nabla u$, which means that $u_{x_i}=h(R)x_i$. The induced metric $g=f^{\ast}G$ has coefficients
\begin{equation}\label{e:thefunctionh}
g_{ij}=2u_{x_ix_j}=2h\left(\frac{x_ix_j}{R}\frac{d}{dR}(\log h)+\delta_{ij}\right).
\end{equation}
Then,
\[
g=2{\mbox Hess}(u)=2h\left(\frac{1}{R}\frac{d}{dR}(\log h)A+I\right),
\]
where $I$ is the $n\times n$ diagonal matrix and, 
\[
A=\begin{pmatrix} x_1^2 & x_1x_2 & \ldots & x_1x_n \\x_1x_2 & x_2^2 & \ldots & x_2x_n\\ \vdots & \vdots & \ddots & \vdots\\ x_1x_n & x_2x_n & \ldots & x_n^2\end{pmatrix}.
\]
We now have,
\[
\frac{R}{h\frac{d}{dR}(\log h)}{\mbox Hess}(u)=A+\frac{R}{\frac{d}{dR}(\log h)}I,
\]
which yields,
\[
{\mbox det}\left(\frac{R}{h\frac{d}{dR}(\log h)}{\mbox Hess}(u)\right)={\mbox det}\left(A+\frac{R}{\frac{d}{dR}(\log h)}I\right)=\prod_{i=1}^n\left(\frac{R}{\frac{d}{dR}(\log h)}-\lambda_i\right),
\]
where $\lambda_i$ are the eigenvalues of $-A$, which are $\lambda_1=\ldots=\lambda_{n-1}=0$ and $\lambda=-R^2$. Thus, 
\[
{\mbox det}{\mbox Hess}(u)=h^n+Rh^{n-1}h'=h^n+\frac{R}{n}(h^n)',
\]
where $h'$ denotes differentiation with respect to $R$. In terms of the field intensity, we have
\[
{\mbox det}{\mbox Hess}(u)=\frac{H^{n-1}H'}{R^{n-1}}.
\]
The nondegenerecy of $g=f^{\ast}G$ implies that the function $H$ monotone. 

\vspace{0.1in}

1. Using Theorem \ref{t:oneofthemainthe}, the immersion $f(p)=(p,\nabla u(p))$ is minimal if and only if ${\mbox det}{\mbox Hess}(u)=c_0$, where $c_0$ is a positive constant. This is equivalent to solving the following differential equation:
\[
h^n+\frac{R}{n}(h^n)'=c_0.
\]
Solving this ODE, we get,
\begin{equation}\label{e:thefuncth}
h(R)=(c_0+c_1R^{-n})^{1/n},
\end{equation}
where $c_1$ is a real constant and thus the field intensity $H$ is 
\[
H(R)=Rh(R)=(c_0R^{n}+c_1)^{1/n}.
\]
Suppose now that $c_1=0$. Then $h(R)=c_0^{1/n}$ and using (\ref{e:thefunctionh}) we have that $u_{x_ix_j}=c_0^{1/n}\delta_{ij}$. Hence, one can see easily that
\[
u(x_1,\ldots,x_n)=\frac{c_0^{1/n}}{2}(x_1^2+\ldots+x_n^2)+d,
\]
where $d$ is a real constant. Using Theorem \ref{t:oneofthemainthe}, it follows that the graph $f$ is totally geodesic.
If, conversely, $f$ is totally geodesic, the function $u$ must satisfy (\ref{e:totallycondition}). Since $\nabla u=h(R)(x_1,\ldots, x_n)$, we have
\[
u_{x_k}=h(R)x_k.
\]
On the other hand, taking the derivative of (\ref{e:totallycondition}) with respect to $x_k$ we have
\[
2\sum_{i=1}^n a_{ik}x_i+b_k=h(R)x_k,
\]
which yields,
\[
(2a_{kk}-h)x_k+2\sum_{i\neq k}^n a_{ik}x_i+b_k=0.
\]
Therefore, $b_k=0$ and $a_{ik}=0$ for any $i\neq k$. Thus, $h(R)=a_{kk}$ for any $k$, which implies that $a_{kk}=a$ for some constant $a$. Then $h(R)=a$ and it can be obtained by using (\ref{e:thefuncth}) by setting $c_1=0$. 

2. Suppose that the immersion $f$ is Hamiltonian minimal and set $\Phi={\mbox det}{\mbox Hess}(u)$. We have seen from part 1, that
\[
\Phi=\frac{H^{n-1}H'}{R^{n-1}}.
\]
Note that $f$ is minimal if and only $\Phi$ is positive constant. Therefore, we assume that $\Phi'\neq 0$ except to some isolated points. 
Using Theorem \ref{t:oneofthemainthe}, we know that $f$ is Hamiltonian minimal when
\[
\Delta\log\Phi=0,
\]
where $\Delta$ stands for the Laplacian with respect to $g$. This implies,
\begin{equation}\label{e:hminimalcondition}
g(\tilde\nabla\Phi,\tilde\nabla\Phi)=\Phi\Delta\Phi,
\end{equation}
where $\tilde\nabla$ denotes the gradient with respect to $g$. One can show easily that the coefficients $g^{ij}$ of the inverse of $g$ is given by
\[
g^{ij}=-\frac{x_ix_j}{2RH'}\frac{d}{dR}\log\left(\frac{H}{R}\right),\quad \mbox{if}\;\; i\neq j \]

\begin{equation}\label{e:inverseinducedmetric}
g^{ii}=\frac{1}{2H'}\left(1+\frac{R^2-x_i^2}{R}\frac{d}{dR}\log\left(\frac{H}{R}\right)\right).\end{equation}

Using (\ref{e:inverseinducedmetric}), a brief computation gives
\[
g(\tilde\nabla\Phi,\tilde\nabla\Phi)=\frac{(\Phi')^2}{2H'},
\]
and 
\[
\Phi\Delta\Phi=-(n-1)\frac{\Phi \Phi'}{2H'}\frac{d}{dR}\log\left(\frac{H}{R}\right)+R\Phi\frac{d}{dR}\left(\frac{\Phi'}{2RH'}\right)+\frac{\Phi\Phi'}{2RH'}+\frac{(\Phi')^2}{2H'}.
\]
The Hamiltonian minimal condition (\ref{e:hminimalcondition}) now becomes
\[
-(n-1)\frac{\Phi'}{2H'}\frac{d}{dR}\log\left(\frac{H}{R}\right)+R\frac{d}{dR}\left(\frac{\Phi'}{2RH'}\right)+\frac{\Phi'}{2RH'}=0,
\]
and away from possible minimal points (i.e. $\Phi'=0$), we get
\[
\frac{\Phi''}{\Phi'}=(n-1)\frac{d}{dR}\log\left(\frac{H}{R}\right)+\frac{d}{dR}\log H'.
\]
This implies
\[
\Phi'=c_0\Phi,
\]
where $c_0$ is a nonzero real constant. Then 
\[
\frac{H^{n-1}H'}{R^{n-1}}=ke^{c_0R},
\]
and solving the differential equation we obtain the Hamiltonian minimal condition of the Proposition.
\end{proof}

\section{Special isometric embeddings}\label{s5}
%%%%%%%%%%%%%%%%%%%%%%%%%%%%%%%%%%%%%%%%%%

It is well known that the space ${\mathbb L}({\mathbb R}^3)$ of oriented lines in the Euclidean 3-space ${\mathbb R}^3$ is identified with the $T{\mathbb S}^2$, where ${\mathbb S}^2$ denotes the round 2-sphere. Consider the K\"ahler metric ${\mathbb G}$ of ${\mathbb L}({\mathbb R}^3)$ derived from the standard K\"ahler structure endowed on ${\mathbb S}^2$. We then prove:

\begin{proof}[{\bf Proof of Theorem \ref{t:minimalembed}}] Consider the round 2-sphere ${\mathbb S}^2$ and let $f:T{\mathbb S}^2\rightarrow T{\mathbb R}^3:(p,V)\mapsto (p,-p\times V)$ be the embedding, where $\times$ is the cross product in ${\mathbb R}^3$. For $X\in T_{(p,V)}T{\mathbb S}^2$, the derivative $df(X)$ is given by
\begin{equation}\label{e:derivative}
\Pi df(X)=\Pi X,\qquad Kdf(X)=-\Pi X\times V-p\times KX.
\end{equation}
The metric ${\mathbb G}$ in $T{\mathbb S}^2$ is
\[
{\mathbb G}_{(p,V)}(X,Y)=g(KX,p\times \Pi Y)-g(\Pi X,p\times KY).
\]
For $X,Y\in T_{(p,V)}T{\mathbb S}^2$, we have
\begin{eqnarray}
(f^{\ast}G)_{(p,V)}(X,Y)&=&G_{f(p,V)}(f_{\ast}X,f_{\ast} Y)\nonumber \\
&=& g(\Pi df X, Kdf Y)+ g(\Pi df Y, Kdf X)\nonumber \\
&=& g(\Pi X, -\Pi Y\times V-p\times KY)+ g(\Pi Y,-\Pi X\times V-p\times KX)\nonumber\\
&=& -g(\Pi X,p\times KY)-g(\Pi Y,p\times KX),\nonumber
\end{eqnarray}
which shows that $f^{\ast}G={\mathbb G}$ and thus $f$ is an isometric embedding.

We now show that $f$ is minimal. Denote by $\nabla,\overline\nabla$ the Levi-Civita connections of $({\mathbb R}^3,\left<.,.\right>)$ and $(T{\mathbb R}^3,G)$, respectively and also denote respectively by $D,\overline D$ the Levi-Civita connections of $({\mathbb S}^2,g)$ and $(T{\mathbb S}^2,{\mathbb G})$.

A brief computation gives
\[
\overline\nabla_{dfX}dfY=(\nabla_{\Pi X}\Pi Y,-\nabla_{\Pi X}\Pi Y\times V-p\times\nabla_{\Pi X}KY-\Pi Y\times KX-\Pi X\times KY+\left<\Pi X,V\right>p\times \Pi Y)
\]
and using the fact that $\overline D_XY=(D_{\Pi X}\Pi Y,D_{\Pi X}KY-\left<V,\Pi X\right>\Pi Y)$, we have
\[
df(\overline D_XY)=(\nabla_{\Pi X}\Pi Y-\left<\Pi X,\Pi Y\right>p,-\nabla_{\Pi X}\Pi Y\times V+\left<\Pi X,\Pi Y\right>p\times V
\]
\[
\qquad\qquad\qquad\qquad\qquad\qquad\qquad\qquad\qquad\qquad-p\times \nabla_{\Pi X}KY+\left<V,\Pi X\right>p\times \Pi Y)
\]
The second fundamental form $h$ of $f$ is given by
\begin{eqnarray}
h(dfX,dfY)&=&\overline\nabla_{dfX}dfY-df(\overline D_XY)\nonumber\\
&=&(\left<\Pi X,\Pi Y\right>p,\left<\Pi X,\Pi Y\right>p\times V-\Pi Y\times KX-\Pi X\times KY)\nonumber
\end{eqnarray}
Suppose that $(p,V)\in T{\mathbb S}^2$ and $|V|\neq 0$. Consider the following orthogonal basis of $T_{(p,V)}T{\mathbb S}^2$:
\[
E_1=(V,p\times V),\qquad E_2=(p\times V,V),\qquad E_3=(V,-p\times V),\qquad E_4=(-p\times V,V).
\]
Thus,
\[
|E_1|^2=-|E_2|^2=-|E_3|^2=|E_4|^2=2|V|^2.
\]
Using (\ref{e:derivative}) we have
\[
df E_1=(V,V),\qquad df E_2=(p\times V,|V|^2p-p\times V),\qquad 
df E_3=(V,-V),
\]
\[
df E_4=(-p\times V,-|V|^2p-p\times V).
\]
The second fundamental form for this basis is
\[
h(df E_1,df E_1)=h(df E_3,df E_3)=(|V|^2p,-|V|^2p\times V),
\]
and
\[
h(df E_2,df E_2)=h(df E_4,df E_4)=(|V|^2p,-|V|^2p\times V-2V),
\]
The mean curvature $H$ of $f$ is
\[
H=\frac{1}{2|V|^2}\Big(h(df E_1,df E_1)-h(df E_2,df E_2)-h(df E_3,df E_3)+h(df E_4,df E_4)\Big),
\]
which clearly vanishes and therefore the embedding $f$ is minimal.

Suppose that $V=0$ and let $\xi$ be a unit vector in $T_{p}{\mathbb S}^2$. Then we obtain the following orthonormal frame for $T_{(p,0)}T{\mathbb S}^2$:
\[
E_1=(\xi,p\times\xi),\qquad E_2=(p\times\xi,\xi),\qquad E_3=(\xi,-p\times\xi),\qquad E_4=(-p\times\xi,\xi).
\]
We then have
\[
df E_1=(\xi,\xi),\qquad df E_2=(p\times\xi,-p\times\xi),\qquad 
df E_3=(\xi,-\xi),
\]
\[
df E_4=(-p\times\xi,-p\times\xi).
\]
The second fundamental form for this basis is
\[
h(df E_1,df E_1)=h(df E_3,df E_3)=
h(df E_2,df E_2)=h(df E_4,df E_4)=(p,0),
\]
and thus,
\[
H=h(df E_1,df E_1)-h(df E_2,df E_2)-h(df E_3,df E_3)+h(df E_4,df E_4)=0,
\]
and the Theorem follows.
\end{proof}

Note that $(T{\mathbb S}^2,{\mathbb G})$ can't be isometrically embedded in $(T{\mathbb R}^3,G_0)$, where $G_0$ is the Sasakian metric.
\vspace{0.1in}
%%%%%%%%%%%%%%%%%%%%%%%%%%%%%%%%%%%%%%%%%%%%%%%%%%%
%\subsection{Quasi-linear Navier-Stokes Equations}
%%%%%%%%%%%%%%%%%%%%%%%%%%%%%%%%%%%%%%%%%%%%%%%%%%%

%\vspace{0.1in}

%---------------- formulate mean curvature flow for a spacelike vector field on ${\mathbb R^n}$ and compare to the Navier-Stokes equations for such vector fields -------

%\vspace{0.1in}

\end{document}